
\documentclass[12pt,makeidx]{amsart}
\usepackage{amssymb}

\usepackage{enumerate,color}
\usepackage[pagebackref,colorlinks,linkcolor=red,citecolor=blue,urlcolor=blue,hypertexnames=true]{hyperref}
\usepackage{url}

\renewcommand{\subset}{\subseteq}
\setcounter{tocdepth}{3}

\numberwithin{equation}{section}

\linespread{1.176}

\textwidth = 6.5 in 
\textheight = 8.5 in 
\oddsidemargin = 0.0 in 
\evensidemargin = 0.0 in
\topmargin = 0.0 in
\headheight = 0.0 in
\headsep = 0.3 in
\parskip = 0.05 in
\parindent = 0.3 in


\newtheorem{theorem}            {Theorem}[section]
\newtheorem{corollary}          [theorem]{Corollary}
\newtheorem{proposition}        [theorem]{Proposition}

\newtheorem{lemma}              [theorem]{Lemma}
\newtheorem{remark}         [theorem]{Remark}




\def\beq{\begin{equation}}
\def\eeq{\end{equation}}

\def\ben{\begin{enumerate}}
\def\een{\end{enumerate}}


\newcommand{\fgg}{x}




\def\cD{ {\mathcal D} }

\def\cF{ {\mathcal F} }
\def\cG{ {\mathcal G} }

\def\cS{{\mathcal S} }

\def\cW{{\mathcal W}}

\def\tA{\tilde{A}}

\def\tC{\tilde{C}}
\def\tD{\tilde{D}}
\def\tJ{\tilde{J}}

\def\RR{{\mathbb R}}

\def\hh{\hat h}

\def\hk{{\hat k}}
\def\hX{\hat{X}}

\def\smatng{\mathbb S_n(\mathbb R^g)}

\def\bem{\begin{pmatrix}}
\def\eem{\end{pmatrix}}

\def\tr{{ \tilde r}}
\def\rr{{\rho}}

\def\q0{{Q_0}}

\def\fP{\mathfrak P}
\def\hX{\mathbb X}
\def\thX{\tilde{\hX}}
\def\bM{\mathbb M}

\def\smatMg{\mathbb S_M(\mathbb R^g)}
\def\smatMNg{\mathbb S_{M,N}(\mathbb R^g)}

\def\smatdg{\mathbb S_d(\mathbb R^g)}
\def\smatmg{\mathbb S_m(\mathbb R^g)}

\def\domfor{\mbox{Domfor}}
\def\domalg{\mbox{Domalg}}
\def\dome{\mbox{Domlim}}
\def\mfI{\mathfrak{I}}
\def\bA{\mathbb A}


\def\Rx{\mathbb R\langle x\rangle}
\def\tth{{\tt{h}}}
\def\ttk{{\tt{k}}}

\def\mfr{\mathfrak{r}}
\def\mfP{\mathfrak{P}}

\def\bV{\mathbb V}

\newcommand{\df}[1]{{\bf{#1}}{\index{#1}}}



\makeindex

\begin{document}

\title[LMIs and free rationally defined convex sets]{Free Convex Sets Defined by Rational Expressions Have LMI Representations}

\author[Helton]{J. William Helton${}^1$}
\address{J. William Helton, Department of Mathematics\\
  University of California \\
  San Diego}
\email{helton@math.ucsd.edu}
\thanks{${}^1$Research supported by NSF grants
DMS-0700758, DMS-0757212, and the Ford Motor Co.}

\author[McCullough]{Scott McCullough${}^2$}
\address{Scott McCullough, Department of Mathematics\\
  University of Florida, Gainesville 
   }
   \email{sam@math.ufl.edu}
\thanks{${}^2$Research supported by NSF grant DMS-1101137.}

\subjclass[2010]{47Axx (Primary)
47A63, 47L07, 14P10 (Secondary)}

\date{\today}
\keywords{matrix convexity, free convexity, linear matrix inequality, 
 noncommutative  rational function, free rational function}

\maketitle

\begin{abstract}
  Suppose  $p$ is a symmetric matrix 
 whose entries are  polynomials in 
  freely noncommutative  variables
  and $p(0)$ is positive definite. 
  Let $\cD_p$  denote the component of zero of the set of those $g$-tuples
 $X=(X_1,\dots,X_g)$ of symmetric matrices (of the same size)
  such that $p(X)$ is positive definite. 
  In \cite{HM} it was shown that  if $\cD_p$ is convex and bounded, then
  $\cD_p$ can be described as the set of solutions
  of  a linear matrix inequality (LMI).  
  This article extends that result from matrices of polynomials
  to matrices of rational functions in free variables.

   As a refinement of a theorem of 
  Kaliuzhnyi-Verbovetskyi and Vinnikov, 
  it is also shown that a minimal symmetric descriptor realization $r$
  for a symmetric free matrix-valued rational function $\mathfrak{r}$ 
  in $g$ freely noncommuting variables $x=(x_1,\dots,x_g)$ precisely encodes
  the singularities of the rational function.  
  This singularities result is an important  ingredient 
  in the proof of  the LMI
  representation theorem stated above.

\end{abstract}

\section{Introduction}
  Given positive integers $g$ 
  \index{$g$} \index{$n$} 
 and  $n$, let $x=(x_1,\dots,x_g)$ denote $g$ freely noncommuting
  indeterminates and \index{$\smatng$}
   let $\smatng$ denote 
  the $g$-tuples $X=(X_1,\dots,X_g)$ of symmetric $n\times n$
  matrices (with real entries). 
    Given a positive integer $d$ \index{$d$}
  and a tuple $A\in\smatdg$, let $L_A(x)$ denote the
  \df{homogeneous linear pencil} \index{$L_A$}
\[
  L_A(x) = \sum_{j=1}^g A_j x_j.
\] 
  The $d\times d$ matrix $J$ is a \df{symmetry} \index{$J$}
  if $J=J^T$  and $J^2=I$. The expression 
\[
 J-L_A(x)
\] 
 is an example of an \df{affine linear pencil}. Such a pencil 
 is naturally evaluated at $X\in\smatng$ as
\[
  J-L_A(X) = J\otimes I_n - \sum_{j=1}^g A_j\otimes X_j,
\]
  with the result a $dn\times dn$
 symmetric matrix.  Here $\otimes$ denotes the Kronecker product of
 matrices and $I_n$  the $n\times n$ identity matrix. 

  For a symmetric matrix $Y$, the notation $Y\succ 0$ (resp.
  $Y\succeq 0$) indicates that $Y$ is positive definite (resp.
  positive semidefinite). 
  In the case that $J=I$, the pencil $I-L_A$ is
 a \df{monic affine linear pencil} and, for each $n$, the set
\[
  \cD_{I-L_A}(n)= \{X\in\smatng: I-L_A(X)\succ 0\}
\]
 is convex.  The sequence of sets $(\cD_{I-L_A}(n))_n$ is
 a \df{(free) LMI domain}, synonymously a \df{(free) spectrahedron}.

  The main results of this article gives an LMI
  representation for a free bounded convex set
  determined by a free rational function. 
  Before giving a precise statement, we pause to informally describe a sample result. 
  Let $\mfr$ be an $\ell\times \ell$ matrix-valued  symmetric
 free rational function which is positive definite at the origin. 
 {\it If
 the sequence of sets
\[
 \mfP_{\mfr}(n) = \{X\in\smatng : \mfr(X) \mbox{ is defined and } \mfr(X)\succ 0\}
\]
 is uniformly bounded and each $\mfP_{\mfr}(n)$ is convex, then there is a monic
 affine linear pencil $I-L_A$ such that, for each $n$,
\[
  \mfP_{\mfr}(n) = \cD_{I-L_A(x)}(n).
\]}

  The proof depends upon a description of the domain and singularities of a free rational function
  together with the main result of \cite{HM}.   In the remainder of this introduction
  the fairly minimal necessary background on free rational functions  - in terms of descriptor realizations - 
  and their singularities needed to state the main results is developed.  Descriptor realizations and their domains are 
  the topic of Subsections \ref{sec:descriptor-realizations} and \ref{sec:domains} respectively. 
  A precise statement
  of the main results on LMI representations of free bounded convex sets
  determined by free rational functions is given in Subsection \ref{sec:LMIreps} as Theorem \ref{cor:main}.
  The proof of Theorem \ref{cor:main} hinges upon the nature of the singularities
  of descriptor realizations a discussion of which appears in Subsection \ref{sec:introhidden}.
  Subsection \ref{sec:guide} is a reader's guide to the body
  of the paper. Acknowledgments appear in Section \ref{sec:ack}.

\subsection{Descriptor realizations}
  \label{sec:descriptor-realizations}
 For a positive integer $n$, the set
\begin{equation}
 \label{eq:defPL}
   \mfI_{J-L_A(x)}(n) = \{X\in\smatng: J-L_A(X) \mbox{ is invertible} \}
\end{equation}
 is open and dense in $\smatng$. 
 The sequence $\mfI_{J-L_A(x)} = ( \mfI_{J-L_A(x)}(n))_n$ is
  \index{$\mfI$}
 the \df{invertibility set} of $J-L_A(x)$. \index{$M_{d\times\ell}$}
 Let $M_{d\times \ell}$ denote the $d\times \ell$ matrices with real entries.
 Given $C\in M_{d\times \ell}$ and $D\in M_{\ell \times \ell}$ \index{$r$}
 the expression 
\begin{equation}
 \label{eq:dreal}
 r(x) = D+ C^T(J-L_A(x))^{-1}C
\end{equation}
  is known
 as a \df{descriptor realization}.\footnote
 {More precisely $r$ is a \df{symmetric} descriptor realization, but
  we often drop the adjective symmetric here.}
 \footnote{The reason for this
 terminology is explained later.}  
  It is an example of a \df{rational expression}.  We assume throughout that
  that $r$ is \df{positive at the origin}; i.e.,
\[
  D+C^TJC \succ 0.
\]

 The rational expression
 $r$ can be evaluated at any  $X\in\mfI_{J-L_A(x)}$ as
\[
 r(X) = D\otimes I + C^T\otimes I(J-L_A(X))^{-1} C\otimes I.
\]
  In the case that $X$ has size $n$  so that $X\in\mfI_{J-L_A(x)}(n),$
   the matrix $I$ is the $n\times n$ identity and 
  the matrix $r(X)$ is symmetric  of size $\ell n\times \ell n$.
   Further, for  $X=\mathbf{0}$ - the $g$-tuple of $n\times n$ zero matrices -
 and $0\in\mathbb R^g$ the $0$ vector, 
\[
  r(\mathbf{0}) = r(0)\otimes I_n = (D+C^TJC)\otimes I_n \succ 0.
\]

\subsection{The domains of a descriptor realization}
 \label{sec:domains}
%
  It could happen that there is an $n$ and $\chi \notin \mfI_{J-L_A(x)}(n)$ 
such that
\[
 r(\chi ):= \lim_{X\to \chi} r(X) 
\]
 exists, where the limit is taken through $\mfI_{J-L_A(x)}(n).$ 
 Such a $\chi$ is called a \df{hidden singularity} of
 the rational expression  $r.$ 
 For positive integers $n$, let 
 $$\dome(r,n)= ( \mfI_{J-L_A(x)}(n)
  \cup \mbox{hidden singularities of} \ r )\ \cap \ \smatng.$$
 The \df{limit domain of $r$},
 denoted $\dome(r,n),$ \index{domlim} is the sequence $(\dome(r,n))_n$.

 The notion of a rational expression (analytic at $0$)\footnote{In this article, rational expression
  means rational expression analytic at $0.$} 
  is defined in Section \ref{sec:rat}.
  For now we note that a descriptor realization is an example and
  that rational expressions have, for each $n$, a domain in
  $\smatng$ which is both open and dense and contains a neighborhood
  of $0$. 
  Two rational expressions $r$ and $\hat{r}$ may have the property
$r(X)=\hat{r}(X)$ for all tuples of matrices for which  they 
 are both defined,
in which case we shall say they represent the same  
 noncommutative (free)  rational function $\mathfrak{r}$. \index{$\mathfrak{r}$} Thus,
 a free rational function $\mathfrak{r}$ is an equivalence class of 
  rational expressions and it is natural to define 
  another notion of domain for the rational expression $r$.   
  The \df{algebraic domain} of $r$
 is the union of the formal domains 
 of all the
 rational expressions equivalent to $r$.
 A more precise presentation, complete with 
 the definition of formal domain,  appears in Section \ref{sec:rat}.
  In any event, the algebraic domain of 
 the descriptor realization $r$ is also \index{domalg}
 a  sequence $\domalg(r)= (\domalg(r,n))_n$ and 
\[
   \domalg(r,n) \subset \dome(r,n)\subset \smatng.
\]

\begin{remark}\rm
 \label{rem:secondkind}
 For a classical perspective on the distinction between 
  limit and algebraic domains,  consider 
 the rational function
\[
   R(x,y) =\frac{x^2y^2}{x^2+y^2}.
\]
  The origin is in its limit domain, but not in its algebraic domain. 

  Generally, suppose $P$ and $Q$ are 
  classical commutative relatively prime polynomials
  in $g$ variables. A  
  {\it non-essential singularity of the second kind} of the rational
  function $R=\frac{P}{Q}$ 
  is a point where both $P$ and $Q$  vanish.
  There is a large literature in multivariable systems theory of
  (classical commutative) rational functions (transfer functions)  
  with essential singularities of the second kind 
  at points on the distinguished boundary of the polydisc.
 \end{remark}


\subsection{LMI representations}
 \label{sec:LMIreps}
 For positive integers $n$, let \index{$\fP_r$}
 \[
  \fP_r(n) = \{X \in \dome(r,n): r(X)\succ 0\}.
 \]
 The sequence $\fP_r = (\fP_r(n))$ is the \df{positivity set of $r$}.

 Similarly, let $\cD_r(n)$ \index{$\cD_r$}
  denote the component of zero of the set 
\[
  \{X\in\domalg(r,n): r(X)\succ 0 \};
\]
that is, $\cD_r(n)$ is the principal component
 of the the set of $X\in\smatng$ in the algebraic
 domain of $r$ for which $r(X)$ is positive definite.
 The assumption that $r(0)\succ 0$ implies
 the sets $\cD_r(n)$ and $\fP_r(n)$ are not empty (for each $n$).

A sequence $\cD=(\cD(n))$ of sets $\cD(n)\subset \smatng$ is 
\df{bounded} if there exists an $R\in\RR$ such that 
$X_1^2 + \dots + X_g^2 \preceq R I_n$ for each $n$
 and $X\in\cD(n)$.  The following theorem contains the main
  results of this article. 
  The notion of a minimal descriptor realization 
  is defined at the outset of Section \ref{sec:words}.
  For now we note that the condition is natural and
  that minimality can be assumed without loss of generality.

\begin{theorem}
 \label{cor:main}
   Suppose $r$ is a minimal symmetric descriptor realization with $r(0)\succ 0$. 
\begin{itemize}
 \item[(lim)] If $\fP_r$ is bounded and each $\fP_r(n)$ is convex, then 
   there exists a positive integer $m$ and a tuple $\bA\in\mathbb S_m(\mathbb R^g)$ 
  such that $X\in\fP_r(n)$ if and only if
\[
  I-L_{\bA}(X) \succ 0.
\]
 \item[(alg)]
   If $\cD_r$ is bounded and for each positive integer $n$ the
   set $\cD_r(n)$ is convex, then 
  there exists a positive integer $m$ and a tuple $\bA\in\mathbb S_m(\mathbb R^g)$ 
  such that $X\in\cD_r(n)$ if and only if
\[
  I-L_{\bA}(X) \succ 0.
\]
\end{itemize}  
\end{theorem}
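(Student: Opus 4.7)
The plan is to reduce Theorem~\ref{cor:main} to the polynomial LMI representation theorem of \cite{HM} by manufacturing, from the descriptor realization $r$, a symmetric matrix polynomial $p$ with $p(0)\succ 0$ whose principal positivity component coincides with $\cD_r$. Then \cite{HM} applied to $p$ supplies the monic affine pencil. Two ingredients do the heavy lifting: the singularities theorem of this paper (a refinement of Kaliuzhnyi-Verbovetskyi and Vinnikov) and a Schur-complement style block construction converting the rational inequality into a polynomial one.

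As the first step I would invoke the singularities theorem. Because $r$ is \emph{minimal}, the algebraic domain of $r$ coincides with $\mfI_{J-L_A(x)}(n)\cap\smatng$, so all singularities of $\mathfrak{r}$ are faithfully read off from the invertibility locus of the pencil $J-L_A(x)$. Consequently on the connected component of $0$ in $\mfI_{J-L_A(x)}(n)\cap\smatng$ the pencil $J-L_A(X)$ has constant signature equal to that of $J$, and this component contains $\cD_r(n)$. This uniform signature is the structural fact that enables passage from a rational positivity condition to a polynomial one.

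Next I would use the identity $r(x)=D+C^T(J-L_A(x))^{-1}C$ together with a Schur-complement computation to produce a symmetric matrix polynomial $p(x)$ whose block entries are built from $J-L_A(x)$, $C$, $C^T$, and $D$, sign-adjusted to reflect the constant signature of $J$ on the component of $0$, so that on that component one has the equivalence $p(X)\succ 0 \iff r(X)\succ 0$ and such that $p(0)\succ 0$. One then verifies $\cD_p=\cD_r$ and applies the main theorem of \cite{HM} to $p$: since $\cD_p$ is bounded and convex, there exists a monic affine linear pencil $I-L_{\bA}(x)$ with $\cD_{I-L_{\bA}}=\cD_p=\cD_r$, establishing part~(alg). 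For part~(lim), I would appeal again to the singularities theorem: a hidden singularity of $r$ in the interior of the bounded convex set $\fP_r(n)$ would correspond to a tuple at which $J-L_A$ fails to be invertible but $r$ extends continuously, which minimality of the realization excludes in the interior of the principal component. Thus $\fP_r(n)$ and $\cD_r(n)$ share the same closure and interior, and the LMI of (alg) also describes $\fP_r(n)$.

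I expect the main obstacle to be the construction of $p$ in the preceding paragraph. Because $J$ is only a symmetry, possibly indefinite, the naive block lift
\[
  \begin{pmatrix} J-L_A(x) & C \\ C^T & D \end{pmatrix}
\]
is not positive definite at $0$, and a sign-adjustment exploiting the spectral decomposition of $J$ is required so that $p(0)\succ 0$ and so that a Schur complement of $p(X)$ encodes $r(X)$ globally on the principal component. Making the sign-adjustment consistent across the whole component, rather than only in a neighborhood of $0$, is precisely where the refined singularities theorem of this paper is indispensable: without it, one cannot rule out hidden jumps in the invertibility picture that would break the local-to-global extension of the block construction.
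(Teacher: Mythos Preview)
Your high-level plan---manufacture a symmetric matrix polynomial whose principal positivity component equals $\cD_r$, then invoke \cite{HM}---matches the paper's. But the obstacle you identify is not the real one. Arranging $p(0)\succ 0$ is harmless: for any symmetric matrix polynomial $q$ with $q(0)$ invertible one may pass to $q^2$, so the ``sign-adjustment'' is a red herring and one may simply work with principal invertibility components. The genuine gap is that a Schur-complement block built from $J-L_A$, $C$, $D$ need not become singular when $J-L_A$ does. With $g=d=\ell=1$ and $J=A=C=D=1$ (so $r(x)=1+(1-x)^{-1}$, minimal), the block $q(x)=\bigl(\begin{smallmatrix}1-x&1\\1&-1\end{smallmatrix}\bigr)$ has $\det q(x)=x-2$; hence $q(1)$ is invertible even though $J-L_A(1)=0$, and the principal invertibility component of $q$ is $(-\infty,2)\supsetneq\cD_r(1)=(-\infty,1)$. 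In general your block detects boundary points of $\cD_r$ where $r(\chi)$ degenerates, but it misses those where only $J-L_A(\chi)$ does, so $\cD_p=\cD_r$ fails.

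The paper closes this gap by bringing in a \emph{second} pencil: it takes a minimal symmetric realization $\tilde r=\tilde D+\tilde C^T(\tilde J-L_{\tilde A}(x))^{-1}\tilde C$ of $r^{-1}$, sets $P(x)=(J-L_A(x))\oplus(\tilde J-L_{\tilde A}(x))$, and proves (using the no-hidden-singularities theorem for both $r$ and $\tilde r$, via Lemma~\ref{lem:boundary}) that at every boundary point of $\cD_r$ (resp.\ $\fP_r$) one of the two summands is singular, whence $\cD_r=\mfI_P^\circ$ and \cite{HM} applies. The missing idea in your sketch is precisely this use of $r^{-1}$ to catch the case where $r(\chi)$ is singular but $J-L_A(\chi)$ is not. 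You could alternatively salvage your construction by taking $(J-L_A)\oplus q$, so that the pencil singularity is recorded explicitly; but some such extra summand is unavoidable---the single block $q$ does not suffice.
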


Theorem \ref{cor:main} is a consequence of Theorem \ref{cor:canthide} below which asserts,
 under natural hypotheses, the absence of hidden singularities
 and the main result
 of \cite{HM}. The details are in Section \ref{sec:repCor}.

\begin{remark}\rm
 In the case that $\fP_r$ (or $\cD_r$) is convex, it is in fact \df{matrix convex}. 
 From the general theory of matrix convex sets 
 such a set can be separated from an outlier by a linear matrix inequality
 \cite{EW} (see also \cite{F04,F92,K,W,WW}). 
 The content of Theorem \ref{cor:main} is that in the case the matrix convex set
  is described as the positivity set of a rational function, then a single linear
  matrix inequality simultaneously separates all outliers from the set. 

  The theory of matrix convex sets falls squarely in the realm of 
  operator algebras and spaces \cite{BL,vernbook}.
\end{remark}

\subsection{Hidden singularities}
\label{sec:introhidden}
 There is a stronger notion
 of hidden singularity which reduces to that of hidden
 singularity under various natural hypotheses.
 A hidden singularity $\chi\in\smatng$ of $r$ is 
 \df{well hidden} if for each $m$ and 
 $K \in {\mathbb S_{m}(\mathbb R^g)}$ there
 is a $\rho_0>0$ so that, for all $|\rho|<\rho_0$, 
\[
  \chi\oplus \rho K = \begin{pmatrix} \chi& 0 \\ 0 & \rho K \end{pmatrix} \in \mathbb S_{n+m}(\mathbb R^g)
\]
 is also a hidden singularity.

 The following is our main result on well hidden singularities.
 It is a stepping stone to proving the absence of
  hidden singularities in the presence of additional mild hypotheses
 as given in 
 Theorem  \ref{cor:canthide}.

\begin{theorem}
 \label{thm:main}
   If $r$ is a minimal symmetric descriptor realization
  (with $r(0)\succ 0$), then $r$ has no well hidden singularities.
\end{theorem}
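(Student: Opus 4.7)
The plan is to argue by contradiction. Suppose $\chi\in\mathbb S_n(\mathbb R^g)$ is a well hidden singularity of $r$, set $M := J - L_A(\chi)$, $V := \ker M\ne\{0\}$, and let $W\subset\mathbb R^d$ be the smallest subspace satisfying $V\subset W\otimes\mathbb R^n$ under the tensor-product identification $\mathbb R^{dn}\cong\mathbb R^d\otimes\mathbb R^n$ implicit in $L_A$. The aim is to show
\[
  C^T\, J^{-1}A_{j_1}J^{-1}A_{j_2}\cdots J^{-1}A_{j_k}\,u = 0
\]
for every word $(j_1,\dots,j_k)$ (including the empty word) and every $u\in W$; by the definition of minimality in Section \ref{sec:words} this forces $W=\{0\}$ and contradicts $V\ne\{0\}$.

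The empty-word case is the usual consequence of hidden singularity at size $n$. Along any ray $X_t = \chi + tF$ with $F\in\mathbb S_n(\mathbb R^g)$, the leading singular part of $(J - L_A(X_t))^{-1}$ has range in $V$; boundedness of $r(X_t)$ as $t\to 0$ forces $(C^T\otimes I_n)V=0$, i.e.\ $C^TW=0$. Now invoke well-hiddenness: fix $m\ge 1$, $K\in\mathbb S_m(\mathbb R^g)$, and $|\rho|$ small enough that $\chi\oplus\rho K$ is a hidden singularity of $r$ at size $n+m$. Perturb to $\chi\oplus\rho K + tE$ with $E\in\mathbb S_{n+m}(\mathbb R^g)$ supported only in its off-diagonal $(n,m)$-block. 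Applying the Schur complement to $J - L_A(\chi\oplus\rho K + tE)$ with the invertible piece $M' := J - L_A(\rho K)$, the $(2,2)$-block of $r(\chi\oplus\rho K + tE)$ tends as $t\to 0$ to $r(\rho K)$ minus a direction-dependent correction of the form $\gamma_E Q^{-1}\gamma_E^T$, where $\gamma_E = (C^T\otimes I_m)(M')^{-1}N^T|_V$, $N$ is the $dn\times dm$ off-diagonal block of $L_A(E)$, and $Q = N(M')^{-1}N^T|_V$. Comparing against the block-diagonal approach, whose limit is $r(\chi)\oplus r(\rho K)$, this correction must vanish for every $E$, $K$, and $\rho$.

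Expanding the vanishing correction in powers of $\rho$ and equating coefficients, then varying $E$ through rank-one off-diagonal perturbations $e_i e_k^T$ placed in a single coordinate slot $j_0$, and using the arbitrariness of $K\in\mathbb S_m(\mathbb R^g)$ to isolate each monomial in the $K_j$'s, the word identities displayed above with $k\ge 1$ emerge one length at a time. Combined with the empty-word identity, this gives the desired contradiction with minimality. The principal obstacle is this last extraction: translating the single scalar condition asserting $E$-independence of the $(2,2)$-block limit into sharp word-by-word cancellations requires enough test data, and the well-hidden hypothesis is what supplies it, by permitting every auxiliary size $m$, every test tuple $K$, and every off-diagonal direction $E^{12}$. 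Without that freedom one is stuck with the step-one conclusion $C^TW=0$, which on its own is too weak to contradict minimality.
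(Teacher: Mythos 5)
Your overall strategy---contradiction, Schur complements relative to $\mathcal K=\ker(J-L_A(\chi))$, exploiting well-hiddenness by coupling $\chi$ to arbitrary auxiliary tuples $K$, and finishing with minimality---is the same as the paper's, but two of your key steps have genuine gaps. First, the claim that hiddenness at size $n$ already forces $(C^T\otimes I_n)V=0$ is not justified. Along a ray $\chi+tF$ the Schur complement on $\mathcal K$ is $-t\,\bV^TL_A(F)\bV+O(t^2)$, and the compression $\bV^TL_A(F)\bV$ may be singular for \emph{every} direction $F$ (its range is only the image of a linear map into the symmetric operators on $\mathcal K$). Existence of the limit then only constrains $C^T$ against a direction-dependent, possibly low-rank leading coefficient, not against all of $V$. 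The same degeneracy undermines your second perturbation: after pivoting on $J-L_A(\rho K)$, the remaining Schur complement is $(J-L_A(\chi))-t^2N(M')^{-1}N^T$, which is singular at $t=0$ of uncontrolled order, so your correction term $\gamma_EQ^{-1}\gamma_E^T$ presumes an invertibility of $Q=N(M')^{-1}N^T|_V$ that can fail, and the limit you compare against need not even exist in the form written. The paper avoids exactly this by first perturbing diagonally by $t^2\hX$, using Cramer's rule to extract the even integer $p$ and the nonzero matrix $\bM=\lim_{t\to0}t^pF(t)^{-1}$, and then inserting the off-diagonal coupling at the tuned rate $st^{q}$ with $q=(p+2)/2$ so that the $H$-dependent correction appears at order $O(1)$; the contradiction ultimately obtained is $\bM=0$, against $\bM\neq0$ by construction---not $W=\{0\}$.

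Second, the extraction you defer as ``the principal obstacle'' is precisely the hard part, and it does not follow from ``arbitrariness of $K$'' alone. After expanding in $\rho$ one faces, for each $\nu$, a sum $\sum_{j=0}^{\nu}(JC\ttk\otimes k)^TL_{AJ}(K)^jQJ\bV\bM\bV^TJQ^TL_{JA}(K)^{\nu-j}(JC\tth\otimes h)=0$ with the unknown $\bM$ sandwiched inside; isolating a single pair of words $(\omega_1,\omega_2)$ from a single summand $j$ requires test data with very specific annihilation properties. The paper supplies these via the truncated Fock space: $K_j=S_j+S_j^*$ on $\cF(\nu_1)\oplus\cF(\nu_2)$, $h=\emptyset\oplus0$, $k=0\oplus\emptyset$, and $Q=(Q_1\ Q_2)$ built from Lemma \ref{lem:fock}, together with the choice $H=Q\hX$, which uses the identity $L_A(\hX)=J\otimes I_N$ on $\mathcal K$ to convert $\gamma$ into $QJ\bV$ and thereby make the minimality vectors $(JA)^wJC\tth$ appear. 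Without these two ingredients---the regularizing diagonal perturbation producing $\bM$, and the Fock-space separation of words---your outline does not close.
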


 Theorem \ref{thm:main} occupies a good part of
  this article and its proof culminates in Section \ref{sec:proof}.
 Proposition \ref{lem:unwell} gives conditions under which 
 hidden singularities are in fact well hidden yielding the following
 result.

\begin{theorem}
 \label{cor:canthide}
   If $r$ is a minimal symmetric descriptor realization with $r(0)\succ 0$, then
\begin{itemize}
  \item[(lim)] if $\fP_r(n)$ is convex for each $n$, then $\fP_r(n)$ contains no hidden singularities; and
  \item[(alg)] the algebraic domain of $r$ contains no hidden singularities.
\end{itemize}
\end{theorem}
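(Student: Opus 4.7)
The plan is to derive Theorem \ref{cor:canthide} from Theorem \ref{thm:main} (which rules out well hidden singularities) by showing, under each of the hypotheses (lim) and (alg), that any hidden singularity $\chi$ of $r$ is automatically well hidden. Since $r$ is minimal, Theorem \ref{thm:main} then delivers a contradiction, proving there is no such $\chi$ in the first place. The bridge from ``hidden'' to ``well hidden'' is precisely the direct-sum stability content of Proposition \ref{lem:unwell}.

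For (lim), suppose toward contradiction that $\chi\in\fP_r(n)$ is a hidden singularity, and fix $m$ and $K\in\smatmg$. The affine pencil distributes over direct sums:
\[
  J-L_A(\chi\oplus \rho K) \;=\; (J-L_A(\chi))\oplus (J-L_A(\rho K)).
\]
Because $\chi\notin\mfI_{J-L_A(x)}(n)$ the first block is singular, so $\chi\oplus\rho K\notin\mfI_{J-L_A(x)}(n+m)$ for every $\rho$. Next I would invoke matrix convexity of $\fP_r$, which follows from the hypothesis that each $\fP_r(n)$ is convex together with the tautological closure of $\fP_r$ under direct sums and unitary conjugation; this yields $\chi\oplus\rho K\in\fP_r(n+m)$ whenever $|\rho|$ is small enough that $\rho K$ lies in the open neighborhood of $0$ on which $r$ is positive definite. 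Finally, one must check that $r$ extends continuously to $\chi\oplus\rho K$ through the full set $\mfI_{J-L_A(x)}(n+m)$, not only along block-diagonal approaches. Using a Schur-complement expansion of $(J-L_A(X))^{-1}$ in block form and the uniform invertibility of the lower-right block in a neighborhood of $\rho K$, the existence of this limit reduces to the already-given existence of the limit of $r$ at $\chi$. Hence $\chi$ is well hidden, contradicting Theorem \ref{thm:main}.

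For (alg), suppose $\chi\in\domalg(r,n)$ is a hidden singularity. By definition of the algebraic domain, there is a rational expression $\hat r$ equivalent to $r$ (so representing the same free rational function) whose formal domain contains $\chi$. Formal domains are open and closed under direct sums with points near $0$, so $\chi\oplus\rho K$ lies in the formal domain of $\hat r$, hence in $\domalg(r,n+m)$, for all sufficiently small $|\rho|$. Equivalent rational expressions agree on the intersection of their formal domains, so $r(X)\to \hat r(\chi\oplus\rho K)$ as $X\to\chi\oplus\rho K$ through $\mfI_{J-L_A(x)}(n+m)$. Combined with the block-diagonal singularity of $J-L_A(\chi\oplus\rho K)$ already noted, $\chi\oplus\rho K$ is a hidden singularity of $r$. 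Hence $\chi$ is well hidden, and Theorem \ref{thm:main} again yields the contradiction.

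The step I expect to be most delicate is the continuity-of-extension argument in case (lim): showing that the limit of $r(X)$ as $X\to \chi\oplus \rho K$ exists through \emph{all} of $\mfI_{J-L_A(x)}(n+m)$, not just along approaches respecting the direct-sum structure. A careful block-matrix resolvent computation, leveraging the uniform invertibility of $J-L_A$ near $\rho K$ and the a priori existence of the limit at $\chi$, should resolve this.
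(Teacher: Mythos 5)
Your overall strategy --- reduce to Theorem \ref{thm:main} by showing that under each hypothesis a hidden singularity is automatically well hidden --- is exactly the paper's (its Proposition \ref{lem:unwell} together with Corollary \ref{cor:unwellmain}), and your treatment of (alg) is correct and essentially identical to the paper's: the formal domain of the equivalent expression $\hat r$ is closed under direct sums with small $\rho K$, and Lemma \ref{lem:limits} supplies the limit through all of $\mfI_{J-L_A(x)}(n+m)$ because $\hat r$ is continuous on its open formal domain and agrees with $r$ on a dense set.

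The (lim) half, however, has a genuine gap at precisely the step you flag as delicate. Closure of $\fP_r$ under direct sums is \emph{not} tautological at a hidden singularity: $\chi\oplus\rho K\in\fP_r(n+m)$ requires $\chi\oplus\rho K\in\dome(r,n+m)$, i.e.\ the existence of $\lim r(X)$ through all of $\mfI_{J-L_A(x)}(n+m)$, which is exactly the well-hiddenness you are trying to establish; invoking it is circular, so everything rests on your Schur-complement argument, and that argument does not go through. Pivoting on the (uniformly invertible) lower-right block $J-L_A(X_{22})$ produces the Schur complement $S=(J-L_A(X_{11}))-E$ with $E=L_A(X_{12})\,(J-L_A(X_{22}))^{-1}L_A(X_{12})^T\to 0$; but $(J-L_A(X_{11}))^{-1}$ blows up as $X_{11}\to\chi$ and the rates of $X_{12}\to 0$ and $X_{11}\to\chi$ are uncoupled, so terms such as $C^T(J-L_A(X_{11}))^{-1}E\,(J-L_A(X_{11}))^{-1}C$ are not controlled by the existence of $\lim C^T(J-L_A(X_{11}))^{-1}C$. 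Indeed, if this reduction worked it would show that every hidden singularity is well hidden with no convexity hypothesis at all, and Theorem \ref{thm:main} would then eliminate all hidden singularities unconditionally --- a stronger statement than the paper proves; the delicate analysis of exactly such perturbation terms is the content of Sections \ref{sec:hidden}--\ref{sec:proof}. The paper sidesteps the limit computation entirely: it first moves $\chi$ radially to $(1\pm\delta)\chi$, which by the determinant argument of Lemma \ref{lem:Xdirection} lie in the honest invertibility set with $r((1\pm\delta)\chi)\succ 0$; the direct sums $(1\pm\delta)\chi\oplus\rho K$ are then unproblematically in $\fP_r(N+m)$, and convexity places their midpoint $\chi\oplus\rho K$ in $\fP_r(N+m)\subset\dome(r,N+m)$, so the required limit exists by definition of the limit domain. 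You should replace your Schur-complement step with this averaging argument.
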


\begin{remark}\rm   
 \label{rem:KvVpf}
   Item (alg) implicitly  appears
   in \cite{KvV}, though with a different proof than found here. 
  After developing background on free rational functions in 
  Section \ref{sec:rat}, we indicate how to obtain
  item (alg) from the results of \cite{KvV}. 

   A very special case of Theorem \ref{cor:canthide} (alg) also appears in \cite{HMV}. 
\end{remark}

\subsection{Reader's guide}
 \label{sec:guide}
  The remainder of the paper is organized as follows.
 Background on noncommutative polynomials and
 minimal symmetric descriptor realizations
 appears in Section \ref{sec:words}. Section \ref{sec:pre}
 reminds the reader of the computation of the  inverse
 of a block matrix
 via the Schur complement and describes a Fock
 space construction.  The proof of Theorem \ref{thm:main}
   begins in Section \ref{sec:hidden} with
  the  details of well hidden
  singularities and it concludes in Section \ref{sec:proof}. 
  The background on free rational expressions and their
  algebraic domains is the subject of Section 
  \ref{sec:rat} and can be skipped by the reader
 interested only in the limit domain. Theorem \ref{cor:canthide}
  is proved at the outset of Section \ref{sec:repCor}.
  Section \ref{sec:repCor} concludes with the 
  proof of Theorem \ref{cor:main}. 

\subsection{Acknowledgments}
 \label{sec:ack}
 We  thank Dima Kaliuzhnyi-Verbovetskyi 
for conversations tying this work to \cite{KvV}.
We also thank Jaka Cimpric for providing an argument
used in \S \ref{sec:KvVpf} and  Ben Greenberg for help with
 the preperation of this manuscript.

\section{A Few Words about Words and Minimal Realizations}
 \label{sec:words}
   Let $\cW$  denote the free semigroup on the $g$ freely 
 noncommuting formal variables \index{$\cW$} 
  $\{\fgg_1,\ldots,\fgg_g\}$.
An element $w$ of $\cW$ is a  \df{word} and takes the form
\[
   w = \fgg_{i_1} \ldots \fgg_{i_k}.
\]
  The \df{empty word}, $\emptyset$, \index{$\emptyset$}
 plays the role of the semigroup identity.
 There is a natural \df{involution ${}^T$} on $\cW$ which reverses  the
 order of a word, \index{$w^T$}
\[
 w^T = x_{i_k} \ldots x_{i_1}.
\]
 In particular, each variable itself is \df{symmetric}
 in the sense that $x_j^T=x_j$.
  A word $w$ is \df{evaluated} at a  tuple 
  $M=(M_1,\dots,M_g)$ of $n\times n$ matrices  by simply replacing
  $x_j$ by $M_j$ so that
\[
   w(M)=M^w = M_{i_1}\ldots M_{i_k}.
\]
 In the case of a tuple $X=(X_1,\dots,X_g)$ of symmetric matrices,
 the transpose operation on words is compatible with the
usual transpose operation on matrices in that $w(X)^T=w^T(X)$. 

  The book \cite{BR84} is an excellent reference 
 for additional details of the discussion of rational expressions found here. 
 The realization of the rational expression $r$ of Equation \eqref{eq:dreal} is \df{minimal} if 
\[
 \{(JA)^w JC\tth : w\in \cW, \ \ \tth \in \mathbb R^\ell\}
\]
 spans all of $\mathbb R^d$. 
 Equivalently, $r$ is minimal if 
 $u\in\mathbb R^d$ and 
\[
  ((JA)^w)^* JC^* u  =0
\]
 for all words $w$  implies $u =0$. 
 (See Lemma 4.1 in \cite{HMV}.)

 Another descriptor realization 
\[
 r_*  = D_* + C_*^T (J-L_{A_*}(x))^{-1} C_*
\] 
  is \df{equivalent} to $r$ if for each $n$ and each $X$ in both
 $\mfI_{J_*-L_{A_*}(x)}$ and $\mfI_{J-L_{A}(x)}$
 we have $r_*(X)=r(X).$ 
 Every descriptor realization is equivalent to a minimal descriptor
 realization. Hence, we may, and henceforth do,  assume without loss of generality
  that the realization $r$ is minimal. In fact, if
  $r_*$ is another minimal descriptor realization equivalent to $r$,
  then, as shown in Lemma \ref{lem:limits},    $\dome(r_*,n)=\dome(r,n).$ 
   From the definition of $\domalg(r,n)$ (which appears
  in Section \ref{sec:rat}) it is evident that $\domalg(r_*,n)=\domalg(r,n)$.
  While it is not needed in the sequel, we note that  any two minimal
   descriptor realizations with the same 
    $D$ term 
   are similar (in a precise sense we do not define here). (See \cite{BMG05} or \cite{HMV} for  examples.)

\section{Preliminaries: Schur complements and a Fock space construction}
\label{sec:pre}
 As preliminaries, in this section we remind the reader of the computation of
 the inverse of a matrix using Schur complements and
 introduce a Fock space construction.

\subsection{Schur complements}

\begin{lemma}
 \label{lem:inverse0}
  Suppose that the square matrix $M$ has the block form,
\[
    M= \begin{pmatrix} \Phi  & \Omega^T \\ \Omega & \Psi \end{pmatrix},
 \]
  $\Psi$ is invertible and let
\[
 \cS =\Phi - \Omega^T \Psi^{-1}\Omega.
\]
  The matrix $M$ is invertible
  if and only if $\cS$ is invertible and moreover
  in this case
\begin{equation}
 \label{eq:Slemma}
 \begin{split}
  M^{-1} 
   = & \begin{pmatrix} I & 0 \\ -\Psi^{-1} \Omega & I\end{pmatrix}
 \begin{pmatrix} \cS^{-1} & 0 \\0 & \Psi^{-1} \end{pmatrix} 
  \begin{pmatrix} I & -\Omega^T \Psi^{-1} \\ 0 & I \end{pmatrix} \\
 = & \begin{pmatrix} \cS^{-1} &     -\cS^{-1}\Omega^T \Psi^{-1} \\
       -  \Psi^{-1}\Omega \cS^{-1} & \Psi^{-1} + \Psi^{-1}\Omega \cS^{-1} \Omega^T \Psi^{-1} 
  \end{pmatrix}.
 \end{split}
\end{equation}

  In particular, if $\Psi$ is invertible,
  but  $\cS$ is not invertible, then $M$ is not invertible
 and moreover, for any vector $\zeta$ in the kernel of $\cS$,
 the vector
\[
 \begin{pmatrix} \zeta \\ - \Psi^{-1}\Omega \zeta \end{pmatrix}
\]
 is in the kernel of $M$.
\end{lemma}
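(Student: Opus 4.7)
The proof rests on exhibiting a block LDU factorization of $M$. Specifically, I would first verify by direct multiplication that
\[
 M = \begin{pmatrix} I & \Omega^T \Psi^{-1} \\ 0 & I \end{pmatrix}
     \begin{pmatrix} \cS & 0 \\ 0 & \Psi \end{pmatrix}
     \begin{pmatrix} I & 0 \\ \Psi^{-1}\Omega & I \end{pmatrix},
\]
which is immediate once one recognizes $\cS + \Omega^T \Psi^{-1} \Omega = \Phi$. This step uses only the invertibility of $\Psi$ and the definition of $\cS$.

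Next, the two unipotent triangular factors are manifestly invertible with inverses obtained by negating the off-diagonal block. Consequently $M$ is invertible if and only if the middle block-diagonal factor is, and since $\Psi$ is invertible by hypothesis, this reduces precisely to invertibility of $\cS$. Inverting each factor yields
\[
 M^{-1} = \begin{pmatrix} I & 0 \\ -\Psi^{-1}\Omega & I\end{pmatrix}
          \begin{pmatrix} \cS^{-1} & 0 \\ 0 & \Psi^{-1} \end{pmatrix}
          \begin{pmatrix} I & -\Omega^T \Psi^{-1} \\ 0 & I\end{pmatrix},
\]
and multiplying out the three factors produces the explicit formula \eqref{eq:Slemma}.

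For the final assertion, suppose $\Psi$ is invertible but $\cS$ has a nontrivial kernel vector $\zeta$. A direct computation gives
\[
 M \begin{pmatrix} \zeta \\ -\Psi^{-1}\Omega \zeta \end{pmatrix}
 = \begin{pmatrix} \Phi \zeta - \Omega^T \Psi^{-1}\Omega\, \zeta \\ \Omega \zeta - \Omega \zeta \end{pmatrix}
 = \begin{pmatrix} \cS \zeta \\ 0 \end{pmatrix} = 0,
\]
so this vector lies in the kernel of $M$, and in particular $M$ fails to be invertible.

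There is no real obstacle here; the entire argument is bookkeeping around the LDU identity. The only thing to be careful about is matching conventions so that the off-diagonal blocks of the triangular factors carry the correct sign when one inverts, which is easily checked by multiplying out the asserted formula in the second line of \eqref{eq:Slemma}.
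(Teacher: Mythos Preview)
Your proof is correct and follows essentially the same approach as the paper: the authors likewise exhibit the block LDU factorization of $M$ (pivoting on $\Psi$) and appeal to direct calculation for both the inverse formula and the kernel vector. Your write-up simply spells out the details the paper leaves implicit.
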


\begin{remark}\rm
 \label{rem:sure}
 The matrix $\cS$ is the \df{Schur complement} of $M$ with 
  respect to $\Phi$. Alternately it is the Schur complement of $M$
 \df{pivoting} on $\Psi$. 

  Similarly, the Schur complement of $M$ with respect to $\Psi$ is
  \[
 \cS_* =\Psi - \Omega \Phi^{-1}\Omega^T
\]
  and in this case the analog of Equation \eqref{eq:Slemma} is
\begin{equation}
 \label{eq:Slemma-alt}
 \begin{split}
M^{-1} 
   = &  \begin{pmatrix} I & - \Phi^{-1} \Omega^T \\ 0 & I \end{pmatrix} 
 \begin{pmatrix} \Phi^{-1}  & 0 \\0 & \cS_*^{-1} \end{pmatrix} 
 \begin{pmatrix} I & 0 \\ -\Omega \Phi^{-1} & I\end{pmatrix} \\
 = & \begin{pmatrix} \Phi^{-1}+\Phi^{-1}\Omega^T \cS_*^{-1}\Omega\Phi^{-1} 
      & -\Phi^{-1}\Omega^T \cS_*^{-1} \\ - \cS_*^{-1} \Omega \Phi^{-1} & \cS_*^{-1}
  \end{pmatrix}.
\end{split}
\end{equation}
\end{remark}

\begin{proof}
  Direct calculation shows that the matrix $M$ can be written as
\[
 \begin{pmatrix} I & \Omega^T \Psi^{-1} \\ 0 & I \end{pmatrix}
 \begin{pmatrix} \cS & 0 \\0 & \Psi \end{pmatrix} 
  \begin{pmatrix} I & 0 \\ \Psi^{-1} \Omega & I\end{pmatrix}.
\]
 The second statement is also proved by direct calculation. 
\end{proof}

\subsection{A Fock space construction}
 \label{subsec:Fock}
 For a given positive integer $\nu$, let $\mathcal W_\nu$ denote the
 words of \df{length} at most $\nu$. (The empty word has length $0$.)
 Let $\mathcal F(\nu)$ denote the \index{$\mathcal F(\nu)$}
 Hilbert space with orthonormal basis $\mathcal W_\nu$. Thus
 $\mathcal F(\nu)$ is a truncated version of the standard
 Fock space \index{Fock space} on $g$ the freely noncommuting indeterminates $\{x_1,\dots,x_g\}$. 
 The dimension of $\mathcal F(\nu)$ is 
 $\mathfrak{d} = \sum_0^\nu g^j.$

\def\tw{{\tilde w}}

 Let $S=(S_1,\dots,S_g)$ \index{$S$} denote the shifts on $\mathcal F(\nu)$. Thus
 $S_j$ is determined by its actions on words $w\in\mathcal W_\nu$ by
\[
  S_j w = \begin{cases} x_j w & \mbox{ if } |w|<\nu \\
                          0 & \mbox{ if } |w|=\nu, \end{cases}
\]
 where $|w|$ \index{$\vert w \vert$} 
 denotes the length of the word $w$. 
 It is straightforward to verify that the adjoint of $S_j$ is
 determined by
\[
  S_j^* w = \begin{cases} \tw & \mbox{ if } w=x_j \tw \\
            0 & \mbox{ otherwise.} \end{cases}
\]
  Note that empty word, $\emptyset$, is a cyclic vector
 for the tuple $S$.  Let $K=K(\nu)$ denote the tuple\index{$K$}\index{$K(\nu)$}
 with $j$-th entry $K_j=S_j^*+S_j$. Thus each $K_j$
 is self adjoint and $K\in\mathbb S_{\mathfrak{d}}(\mathbb R^g)$.

\begin{lemma}
 \label{lem:fock}
 Fix  a word $\omega$ of length $\nu$. If $w\in\mathcal W_\nu$,
 but $w\ne \omega$, then $K^w \emptyset$ is orthogonal
  to $\omega$.  In particular, given a nonzero vector $\zeta$ in a  Hilbert
 space $\mathcal H$, there exists a mapping 
  $Q:\mathcal H\to \mathcal F(\nu)$ 
  such that $Q^T K^w\emptyset =0$ if $w\ne \omega$
 and $Q^T K^\omega \emptyset = \zeta$. 
\end{lemma}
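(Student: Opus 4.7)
The plan is to expand $K^w\emptyset$ using $K_j = S_j + S_j^*$ and do a length analysis on the resulting basis words. Write $w = x_{i_1}\cdots x_{i_k}$ with $k = |w| \le \nu$. Then
\[
K^w\emptyset \;=\; (S_{i_1}+S_{i_1}^*)\cdots(S_{i_k}+S_{i_k}^*)\,\emptyset
\]
expands as a sum of $2^k$ terms, each being a composition of $k$ shifts or backward-shifts applied to $\emptyset$. Starting from length $0$, each $S_{i_r}$ raises the current word's length by one (when strictly below $\nu$, else annihilates), while each $S_{i_r}^*$ lowers it by one or annihilates. Hence every nonzero term in the expansion is a single basis word of length at most $k$. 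The crucial point is that the term corresponding to choosing $S_{i_r}$ (never $S_{i_r}^*$) at every stage equals $w$ itself---no truncation occurs because the intermediate lengths stay bounded by $k-1 < \nu$---while any other choice contains at least one downward step that cancels one upward step and hence yields a word of length strictly less than $k$. Therefore
\[
K^w\emptyset \;=\; w \;+\; (\text{combination of basis words of length} < |w|).
\]

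The first statement of the lemma is immediate: if $|w| < \nu = |\omega|$, then $K^w\emptyset$ has no length-$\nu$ component, so it is orthogonal to $\omega$; if $|w| = \nu$ but $w\neq \omega$, the only length-$\nu$ component of $K^w\emptyset$ is $w\neq\omega$, and again orthogonality holds. For the second statement, I would define $Q^T : \mathcal F(\nu) \to \mathcal H$ on the basis $\mathcal W_\nu$ by $Q^T\omega = \zeta$ and $Q^T v = 0$ for every basis word $v\neq\omega$, and take $Q$ to be its transpose. Then by the display above, $Q^T K^\omega\emptyset = Q^T \omega = \zeta$, while for $w\in\mathcal W_\nu$ with $w\neq \omega$ every basis word in the expansion of $K^w\emptyset$ differs from $\omega$ (it has length less than $|w|\leq\nu$, or equals $w\neq\omega$), so $Q^T K^w\emptyset = 0$.

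There is no substantive obstacle; the truncation level in $\mathcal F(\nu)$ is calibrated precisely to make this length-parity argument work cleanly. The only step deserving a moment of care is verifying that the ``all-$S$'' branch reaches length $k$ without being killed by the truncation---this is guaranteed by the standing hypothesis $|w|\le \nu$, since every intermediate word in that branch has length at most $k-1 < \nu$ and so the shifts act non-trivially.
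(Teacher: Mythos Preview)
Your proof is correct and follows essentially the same approach as the paper: expand $K^w\emptyset$ in terms of $S$'s and $S^*$'s, observe that the unique top-length contribution is the word $w$ itself while every other surviving term is strictly shorter, and then define $Q^T$ to kill every basis word except $\omega$. Your length bookkeeping is in fact a bit more explicit than the paper's (you separate the cases $|w|<\nu$ and $|w|=\nu$ and verify that truncation never kills the all-$S$ branch), but the argument is the same.
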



\begin{proof}
  Fix a word $w\ne \omega$ of length $\mu\le \nu$ and write
\[
  w= x_{j_1} x_{j_2} \cdots x_{j_\mu}.
\]
 Because, when expanding $K^w$ as a sum of products
 of the $S_j$ and $S_k^*,$ every term, except for 
 $S^w$,  contains at least one adjoint term,
\[
  K^w\emptyset = S^w \emptyset + m = w+m,
\]
 where $m$ is a sum of words of length at most $\nu-2.$
 It follows that $\omega$ is orthogonal to $K^w\emptyset$. 
 Letting $\cG$ denote the span, in $\cF(\nu)$,  of the set
  $\{K^w\emptyset: w \in \cW_\nu, \ \  w\ne \omega\}$ it
  follows that $\omega$ is orthogonal to $\cG$. 
  In particular, letting $[\omega]$ denote the span
  of $\{\omega\}$ and $[\omega]^\perp$ its orthogonal complement,
  $\mathcal G\subset [\omega]^\perp$.  Define $Y:\cF(\nu)\to H$ 
  by $Y\omega = \zeta$ and $Y=0$ on $[\omega]^\perp$.   
  Then $YK^w\emptyset =0$ for $w\ne \omega$.
  Choosing $Q=Y^T$ completes the proof.
\end{proof}

\section{Well Hidden Singularities and  Perturbations}
 \label{sec:hidden}
   The proof of Theorem \ref{thm:main} begins  here 
  and concludes in Section \ref{sec:proof}.
  It   proceeds by
  contradiction. Accordingly, 
 suppose $r$, the given minimal symmetric descriptor realization
 as in Equation \eqref{eq:dreal}, has a well hidden singularity  $\hX$
  which is now fixed through the end of Section \ref{sec:proof}.
 \index{$\hX$} In particular, $J-L_A(\hX)$ is singular
  and thus has a nontrivial kernel $\mathcal K.$ \index{$\mathcal K$}
  Let $N$ denote the size of $\hX$ so that $\hX \in \mathbb S_N(\mathbb R^g)$.

 Recall that $J-L_A(x)$ is a $d\times d$ affine linear pencil. 
 With respect to the decomposition of $\mathbb R^d \otimes \mathbb R^N$
 as $\mathcal K\oplus \mathcal K^\perp$, write
\[
  J-L_A(\hX) = \begin{pmatrix} 0  & 0 \\ 0 & R \end{pmatrix}
\]
 and
\[
  - L_A(\hX) =\begin{pmatrix} \alpha & \beta^T \\ \beta & R_1
  \end{pmatrix}.  \index{$V$}
\]
   Letting $\bV$ denote the inclusion of $\mathcal K$ into 
 $\mathbb R^d \otimes\mathbb R^N$, the matrix $\alpha$ is given by
 $-\bV^T L_A(\hX)\bV$.  Note that $R$ is invertible.

\begin{lemma}
 \label{lem:Xdirection}
  The pencil $J-L_A(\hX+t^2\hX)$ is invertible for $t$ sufficiently 
  close to $0$, but $t\ne 0$.
\end{lemma}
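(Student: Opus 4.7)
The plan is to exploit the linearity of $L_A$ to reduce to a one-parameter scalar pencil problem. Since $L_A(\hX+t^2\hX) = (1+t^2)\,L_A(\hX)$, the pencil in question equals
\[
  J-(1+t^2)\,L_A(\hX).
\]
So I would introduce the scalar variable $s=1+t^2$ and study $M(s):=J-s\,L_A(\hX)$ for $s$ near $1$.

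The key observation is that $p(s):=\det M(s)$ is a polynomial in the single scalar variable $s$. At $s=0$ we have $M(0)=J$, and because $J$ is a symmetry ($J^2=I$), $\det J = \pm 1\ne 0$. Hence $p$ is not the zero polynomial, so it has only finitely many complex roots. By hypothesis $M(1)=J-L_A(\hX)$ is singular, so $s=1$ is one of those roots, but it is necessarily isolated. Thus there is a $\delta>0$ such that $M(s)$ is invertible whenever $0<|s-1|<\delta$.

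Returning to $t$, one has $s-1=t^2$, and for $t\ne 0$ real and sufficiently small, $0<t^2<\delta$, giving invertibility of $J-L_A(\hX+t^2\hX)$.

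There is no substantive obstacle here; the only point requiring any care is verifying that the scalar polynomial $p(s)$ is not identically zero, which follows immediately from $\det J\ne 0$. The rest is the standard fact that the roots of a nonzero polynomial in one variable are isolated.
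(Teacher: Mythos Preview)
Your proof is correct and is essentially the same as the paper's: both reduce to the one-variable polynomial $p(s)=\det(J-sL_A(\hX))$, observe $p(0)=\det J\ne 0$ so $p$ is not identically zero, and conclude that its zero at $s=1$ is isolated. The paper phrases this by contradiction (if the determinant vanished for all small $t\ne 0$ it would be identically zero, hence zero at $u=-1$, contradicting $\det J\ne 0$), whereas you argue directly, but the content is identical.
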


\begin{proof}
  If not, then $\det(J-L_A(\hX+t^2\hX)),$ being a polynomial, is identically $0$.
  Hence $\det(J-L_A(\hX+u\hX))$ is identically $0$ and thus $0$ at $u=-1,$
  which gives the contradiction $\det(J)=0$. 
\end{proof}

 From the definitions,  
\[
 J-L_A(\hX+t^2\hX) = \begin{pmatrix} t^2\alpha & t^2\beta^T\\
   t^2\beta & R(t) \end{pmatrix},
\] 
  where $R(t)=R +  t^2 R_1$. 
 Since, for $t\ne 0$ but near $0$, both 
  $J-L_A(\hX+t^2\hX)$  and $R(t)$ are invertible, Lemma \ref{lem:inverse0} implies that 
\begin{equation}
 \label{eq:F}
  F(t) := [\alpha - t^2\beta^T R(t)^{-1} \beta]
\end{equation}
 is invertible.  In particular, $\det(F(t))$ is not identically equal to zero.
  Thus Cramer's rule implies that there is a non-negative integer $p$ 
  and a nonzero matrix $\bM$ such that
\begin{equation}
 \label{eq:defbM}
  \lim_{t\to 0} t^p F(t)^{-1} = \bM; \index{$\bM$}
\end{equation}
  i.e., the limit exists and is nonzero. In fact $p$ 
  \index{$p$} is an even
  integer, since
  $F$ is really a function of $t^2$. For future use,
  let $q=\frac{p+2}{2}$ so that $p=2q-2$.  \index{$q$}
 If $\alpha$ is invertible, then $p=0$ and $q=1$
  and the  arguments to come are much simpler. 
 
\subsection{A further perturbation}
  Let $M$ be a positive integer and fix $K\in\mathbb S_M(\mathbb R^g).$
  Let $\smatMNg$ denote the set of 
  $g$-tuples $H=(H_1,\dots, H_g)$ of $M\times N$ matrices.
  Given such an $H$ and real numbers $s,t,\rho$,  let
\begin{equation}
  \label{eq:deftildeX}
 \thX(s,t,\rho,H)= \thX = \begin{pmatrix} \hX+t^2\hX & st^q H^T \\ 
    st^q H & \rho K 
 \end{pmatrix} \index{$\thX$}
\end{equation}
  and observe, since $\hX$ is a well hidden singularity,
  $\thX$ is a hidden singularity for $t=0$ and $\rho$ sufficiently near $0.$ 
  
 Write, with respect to the decomposition of 
  $(\mathbb R^d \otimes \mathbb R^N )\oplus (\mathbb R^d \otimes \mathbb R^M)$
 as $\mathcal K\oplus \mathcal K^\perp\oplus [\mathbb R^d \otimes \mathbb R^M]$,
\begin{equation}
 \label{eq:tryq}
  J-L_A(\thX) = \begin{pmatrix} t^2\alpha & t^2\beta^T & st^q\gamma^T\\
    t^2\beta & R(t) & st^q W^T \\ st^q \gamma & st^q W & 
     J-  \rr L_A(K) 
  \end{pmatrix}.
\end{equation}
   Here $\gamma$ and $W$ come from decomposing $L_A(H^T)$ and 
  so are linear maps (matrices). 

\begin{lemma}
 \label{lem:tXinDr}
  Given $H$ and $K$ (as above), there is a $\rho_0$ such that
  for each $|\rho|<\rho_0$ there is an $s_0$ such that for
  each $|s|<s_0$ there is a $t_0$ such that for $0<|t|<t_0$,
  the matrix $J-L_A(\thX)$ is invertible
  (thus $\thX=\thX(\rho,s,t,H,K)\in \mfI_{J-L_A(x)}$ for such $\rho,s,t$).
\end{lemma}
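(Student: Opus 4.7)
The plan is to treat $P(\rho,s,t) := \det\bigl[J-L_A\bigl(\thX(\rho,s,t,H,K)\bigr)\bigr]$ as a polynomial in the three scalar parameters and reduce the claim to showing that, for every $(\rho,s)$ in a suitable neighborhood of $(0,0)$, $P(\rho,s,\cdot)$ is not identically zero as a polynomial in $t$.

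First, I would specialize to $s=0$ and $\rho=0$. In this case the off-diagonal blocks $st^q H$ and the bottom-right block $\rho K$ in \eqref{eq:deftildeX} vanish, so $\thX$ reduces to the block-diagonal tuple $(\hX + t^2\hX)\oplus 0$. Because the Kronecker structure $L_A(\cdot)=\sum_j A_j\otimes(\cdot)_j$ respects block-diagonality of its argument,
\[
J-L_A\bigl(\thX(0,0,t,H,K)\bigr) \;=\; \bigl(J-L_A(\hX+t^2\hX)\bigr)\oplus J.
\]
The summand $J$ is invertible since it is a symmetry, and Lemma \ref{lem:Xdirection} supplies invertibility of $J-L_A(\hX+t^2\hX)$ for $t$ in some punctured neighborhood of $0$. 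Consequently $P(0,0,t)$ is a nonzero polynomial in $t$.

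Second, since every entry of $L_A(\thX)$ is a polynomial in $(\rho,s,t)$, $P$ is itself a polynomial in these three variables. Expand $P(\rho,s,t)=\sum_k Q_k(\rho,s)\,t^k$ with each $Q_k$ polynomial. The previous step yields an index $k_0$ with $Q_{k_0}(0,0)\neq 0$. Polynomial (in fact, continuous) nonvanishing then furnishes a $\rho_0>0$ such that $Q_{k_0}(\rho,0)\neq 0$ for $|\rho|<\rho_0$, and, for each such $\rho$, an $s_0>0$ (depending on $\rho$) such that $Q_{k_0}(\rho,s)\neq 0$ for $|s|<s_0$. This is exactly the nested quantifier structure in the statement of the lemma.

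Third, for each such fixed $(\rho,s)$ the polynomial $P(\rho,s,\cdot)$ is nonzero in $t$ and hence has only finitely many real roots; taking $t_0$ to be the distance from $0$ to the nearest nonzero root (or any positive number if there are none) gives $P(\rho,s,t)\neq 0$, equivalently invertibility of $J-L_A(\thX)$, for all $0<|t|<t_0$. The only real content of the argument is the first step, where Lemma \ref{lem:Xdirection} does all the work; the remainder is just polynomial continuity together with the observation that a nonzero univariate polynomial has isolated zeros.
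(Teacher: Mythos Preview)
Your argument is correct. Both your proof and the paper's reduce to showing that, for the relevant $(\rho,s)$, the univariate polynomial $t\mapsto\det\bigl(J-L_A(\thX)\bigr)$ is not identically zero, and both invoke Lemma~\ref{lem:Xdirection} for this purpose; the difference is only in bookkeeping. The paper fixes $\rho$ small (so that $J-\rho L_A(K)$ is invertible), picks a specific $u$ from Lemma~\ref{lem:Xdirection}, observes that at $s=0$ the matrix is block-diagonal and invertible at $t=u$, then perturbs in $s$; this exhibits a single value $t=u$ at which $P(\rho,s,\cdot)$ is nonzero. You instead specialize all the way to $(\rho,s)=(0,0)$, identify a nonvanishing $t$-coefficient $Q_{k_0}(0,0)$, and propagate its nonvanishing outward by continuity. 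Your route is slightly more uniform and, incidentally, never appeals to the well-hidden hypothesis (which the paper mentions but does not actually need for this lemma).
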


\begin{proof}
      Since $\hX$ is a well hidden singularity, there
  exists a $\rho_0$ such that if $|\rho|<\rho_0$, then
   $\hX\oplus \rho K$ is a hidden singularity. Moreover,
  $\rho_0$ can be chosen small enough that 
  $\rho K$ is in the component of zero of the
  invertibility set of $J-L_A(x)$.

  By Lemma
  \ref{lem:Xdirection} there is a $u$ so that
   $J-L_A(\hX+u^2 \hX)$ is invertible. 
   It follows that with $\rho$ given and 
   this $u$ there is an $s_0>0$ such that if $|s|<s_0$
  then, $J-L_A(\thX(s,\rho,u,H))$ is invertible. 
  In particular,
  with $\rho$ and such an $s$ fixed, as a function of
   $t$ the matrix-valued polynomial $\tau(t)=J-L_A(\thX(s,\rho,t,H))$
  is invertible at $t=\pm |u|$ and hence  fails to 
   be invertible at most finitely many times. Consequently,
  there is a $t_0>0$ so that for $0<|t|<t_0$ it is invertible
  (with of course $\rho$ and then $s$ fixed).  
 \end{proof}

  Our aim is to use the fact that for, $t=0$ and $\rho$ small,
  $\thX$ is a hidden singularity and  examine
  properties of $r(\thX)$ as $t$ tends to $0$
  (with the other variables fixed). Accordingly, 
  let $\Gamma$ \index{$\Gamma$} denote the lower right two by two block of
  the matrix in Equation \eqref{eq:tryq}. Thus
 \[
  \Gamma = \begin{pmatrix} R(t) & st^q W^T \\
    st^q W & Y\end{pmatrix},
\]
 where \index{$Y$}
\begin{equation}
 \label{eq:Y}
 Y=J- \rr L_A(K). 
\end{equation}
 Let $\Delta$ \index{$\Delta$} denote the the Schur complement of $\Gamma$ (relative
  to  $R(t)$),
\begin{equation}
 \label{eq:Delta}
 \Delta = Y- s^2 t^{2q} W R^{-1}W^T.
\end{equation}
 Since $Y$ is invertible, $\Delta$ is invertible for $t$ sufficiently close to $0$. 

For notational ease, write 
\[
  (J-L_A(\thX)) = \bem
 t^2 \alpha & t^2 \zeta^T \\
  t^2 \zeta & \Gamma
  \eem
  \qquad  \mbox{with} \qquad
  \zeta =   \begin{pmatrix} \beta \\ st^{q-2} \gamma \end{pmatrix}
\]
  and, recalling the definition of $F$ from Equation \eqref{eq:F}, let
\begin{equation}
 \label{eq:F*}
 F_* = F - s^2 t^{2q-2} (\gamma^T - t^2 \beta^T R^{-1}W^T) \Delta^{-1}
   (\gamma -t^2 W R^{-1}\beta).
\end{equation}
 To verify that $F_*$ is the Schur complement for  
$ (J-L_A(\thX))$ pivoting on $\Gamma$ (see Equation \eqref{eq:Slemma-alt}),
  first observe that 
  from Equation \eqref{eq:Slemma},
\begin{equation}
 \label{eq:Gammaminus1}
  \Gamma^{-1} = 
  \begin{pmatrix} I & -st^q R(t)^{-1}W^T \\ 0 & I\end{pmatrix}
  \begin{pmatrix} R(t)^{-1} & 0\\ 0 & \Delta^{-1} \end{pmatrix}
  \begin{pmatrix} I & 0 \\ - st^q WR(t)^{-1} & I \end{pmatrix}.
\end{equation}
   Thus
\begin{equation}
 \label{eq:xF*}
 \begin{split}
     \zeta^T
      \Gamma^{-1}
       \zeta
   = \beta^T R^{-1} \beta \ + \  & (st^{q-2} \gamma^T -st^q \beta^T R^{-1}W^T) \Delta^{-1}
   (st^{q-2}\gamma -  st^q W R^{-1}\beta).
 \end{split}
\end{equation}
  Hence,
\begin{equation}
 \label{eq:F**}
    \alpha - t^2 \zeta \Gamma^{-1}\zeta^T 
     = F_*. 
\end{equation}
 Using \eqref{eq:F**},  an application of Lemma \ref{lem:inverse0}
    gives,
\begin{equation}
 \label{eq:Jminus}
  (J-L_A(\thX))^{-1}
   = \begin{pmatrix} (t^2 F_*)^{-1}  & - (t^2F_*)^{-1} t^2 \zeta^T \Gamma^{-1} \\
     -  \Gamma^{-1} t^2 \zeta (t^2 F_*)^{-1} & 
     \Gamma^{-1} +  t^2  \Gamma^{-1} \zeta  F_*^{-1} \zeta^T \Gamma^{-1}
   \end{pmatrix}.
\end{equation}

Our immediate goal is to analyze $   \hk^T r(\thX) \hh $  where
 \beq
 \label{eq:hats}
 \hh = 0 \oplus \tth \otimes h, \ \
 \hk = 0 \oplus  (\ttk\otimes k)  \in [\mathbb R^\ell \otimes \mathbb R^N] \oplus  (\mathbb R^\ell\otimes \mathbb R^M)
    = \mathbb R^{\ell N}\oplus (\mathbb R^\ell \otimes \mathbb R^M)
 \eeq 
  for given vectors 
 $h,k\in\mathbb R^M$ and  $\tth,\ttk \in\mathbb R^\ell.$ This
  notation we will carry throughout.
 From  Equation \eqref{eq:Jminus},
\beq
\label{eq:hrk}
 \begin{split}
 \hk^T [r(\thX)   - D\otimes I] \hh   
   = & \bem
0\\ C\ttk \otimes k
\eem^T
   (J-L_A(\thX))^{-1}
  \bem
0\\ C\tth \otimes h
\eem \\   
= & (C\ttk \otimes k)^T
  [\Gamma^{-1} + t^2  \Gamma^{-1} \zeta  F_*^{-1} \zeta^T \Gamma^{-1} ]
   C\tth \otimes h.
\end{split}
\eeq

\subsection{Taking a limit}
  To analyzing the limit
 $\lim_{t\to 0}r(\thX),$ we first consider the limit
 $\lim_{t\to 0} t^p F_*^{-1}(t)$. 
   For notational convenience, write
\[
 F_* = F - s^2 t^{2q-2} 
 G(s,t)
\]
 where
\[
  G(s,t) = (\gamma^T - t^2 \beta^T R^{-1}W^T) \Delta^{-1}
   (\gamma -  t^2 W R^{-1}\beta).
\]
   Observe, from Equation \eqref{eq:Delta}, that for $s\ne 0$ fixed, 
\[
  G_0 = \lim_{t \to 0} G(s,t) =\gamma^T Y^{-1} \gamma,
\]
which we note, in view of Equation \eqref{eq:Y}, is independent of of $s.$
 Moreover, with these notations,
\[
  F_* = F(t) - s^2 t^p G(s,t)
  = F(t) [I - s^2 t^p F(t)^{-1}G(s,t)].
\]
It follows that, for $s\ne 0$ sufficiently close to $0$, 
\[
 \begin{split}
\label{eq:defH}
 \eta(s):=& \lim_{t\to 0} t^p F_*^{-1} \\
   = & \lim_{t\to 0} (I- (t^pF^{-1})s^2 G)^{-1} t^p F^{-1}\\
  = & \lim_{t\to 0} (I-s^2 \bM G_0(s))^{-1} \bM.
\end{split}
\]
 Further,
\begin{equation}
 \label{eq:limH}
 \lim_{s\to 0} \eta(s):= \bM.
\end{equation}


\begin{lemma}
 \label{lem:ifind0}
    For $s$ fixed (and sufficiently small), 
\beq
\label{eq:gamlim}
 \lim_{t\to 0}
   \bem
  0 & I_{d M}
  \eem
  \Gamma^{-1} 
   \bem
  0 \\ I_{d M}
  \eem
 =   
 (J -  \rr L_A(K))^{-1} = Y^{-1}
\eeq
 and 
\beq
 \begin{split}
 \label{eq:ifind0}
  \lim_{t\to 0} t^2 
  \bem
  0 & I_{d M}
  \eem
   \Gamma^{-1} \begin{pmatrix}\beta \\ st^{q-2} \gamma
  \end{pmatrix}& F_*^{-1} \begin{pmatrix} \beta^T & st^{q-2}
  \end{pmatrix}  \Gamma^{-1}
   \bem
  0 \\ I_{d M}
  \eem
  \\
  = &
  s^2
   Y^{-1} \gamma \; \eta(s) \;  \gamma^T Y^{-1}
  \\
  = &
 s^2 
  Y^{-1} \gamma 
( I -  s^{2} \bM   \gamma Y^{-1} \gamma^T)^{-1} \bM 
  \gamma^T Y^{-1}.
 \end{split}
\eeq
\end{lemma}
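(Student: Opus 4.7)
The plan is to exploit the explicit Schur-complement block expression for $\Gamma^{-1}$ already recorded in \eqref{eq:Gammaminus1}, which pivots on $R(t)$ and displays $\Delta^{-1}$ in the bottom-right corner. After that, everything reduces to controlled bookkeeping of powers of $t$ combined with the two already-established convergences $\Delta^{-1}\to Y^{-1}$ and $t^p F_*^{-1}\to \eta(s)$.

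For \eqref{eq:gamlim}: multiplying out the three factors in \eqref{eq:Gammaminus1}, the $(2,2)$-block of $\Gamma^{-1}$ is simply $\Delta^{-1}$, so the left side equals $\Delta^{-1}$. Since $\Delta = Y - s^2 t^{2q} W R(t)^{-1} W^T$ and $Y = J - \rho L_A(K)$ is invertible and independent of $t$, continuity of matrix inversion yields $\Delta^{-1}\to Y^{-1}$ as $t\to 0$.

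For \eqref{eq:ifind0}: I first apply \eqref{eq:Gammaminus1} directly to compute the column factor,
\[
\bem 0 & I \eem \Gamma^{-1}\bem \beta \\ s t^{q-2}\gamma \eem
 = -\Delta^{-1}(st^q W)R(t)^{-1}\beta + \Delta^{-1}\,st^{q-2}\gamma
 = s t^{q-2}\Delta^{-1}\bigl(\gamma - t^2 W R(t)^{-1}\beta\bigr),
\]
with the symmetric expression for the row factor on the right. Multiplying these together with $F_*^{-1}$ in between and the prefactor $t^2$, the full left side of \eqref{eq:ifind0} becomes
\[
s^2 t^{2q-2}\,\Delta^{-1}\bigl(\gamma - t^2 W R(t)^{-1}\beta\bigr)\,F_*^{-1}\,\bigl(\gamma^T - t^2 \beta^T R(t)^{-1} W^T\bigr)\,\Delta^{-1}.
\]
Since $p = 2q-2$, I regroup the scalar factor as $t^{2q-2} F_*^{-1} = t^p F_*^{-1}$, which by the discussion leading to \eqref{eq:limH} converges to $\eta(s)$. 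The vanishing correction terms $t^2 W R(t)^{-1}\beta\to 0$ together with $\Delta^{-1}\to Y^{-1}$ then deliver the first claimed equality $s^2 Y^{-1}\gamma \,\eta(s)\,\gamma^T Y^{-1}$. The second equality follows by inserting the closed form $\eta(s)=(I - s^2 \bM G_0)^{-1}\bM$ with $G_0$ as identified just before the statement of the lemma.

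The most delicate point I expect is precisely the matching of powers of $t$: since $q$ may be as small as $1$ (when $\alpha$ is invertible and $p=0$), the factor $t^{q-2}$ appearing in each off-diagonal block of $\Gamma^{-1}$ applied to the column $(\beta,\,st^{q-2}\gamma)^T$ is genuinely singular, and it is only after the two singular factors are squared against $t^2 F_*^{-1}$ to produce exactly $t^p F_*^{-1}$ that a finite, nonzero limit emerges. Once this regrouping is performed correctly, the remaining passage to the limit is routine continuity.
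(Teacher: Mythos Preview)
Your proof is correct and follows essentially the same approach as the paper's: both use the explicit block form \eqref{eq:Gammaminus1} to identify the $(2,2)$-block of $\Gamma^{-1}$ as $\Delta^{-1}$ for \eqref{eq:gamlim}, and both compute $\bem 0 & I\eem \Gamma^{-1}\zeta = st^{q-2}\Delta^{-1}(\gamma - t^2 WR(t)^{-1}\beta)$, regroup the resulting $t^{2q-2}$ with $F_*^{-1}$ as $t^p F_*^{-1}\to \eta(s)$, and pass to the limit. Your version is in fact slightly cleaner in its bookkeeping of the $t$-powers than the paper's own write-up.
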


\begin{remark}\rm
 \label{rem:Q0onlyX}
   Importantly, $\bM $ depends only upon $\hX$ (and the given realization for $r$)
   and not upon $H,K$.  On the other hand, $Y$ depends on $K$ 
  and $\gamma$ depends upon $H$. 
  Later we shall see that \eqref{eq:ifind0} is 0.
\end{remark}

\begin{proof}
  Equation \eqref{eq:gamlim} follows from Equation \eqref{eq:Gammaminus1}
  and the definition of $\Delta$ given in Equation \eqref{eq:Delta}.

  Moving onto Equation \eqref{eq:ifind0}, note that 
\[  t
  \bem  0 & I_{d M}  \eem
    \Gamma^{-1} \begin{pmatrix} \beta \\ st^{q-1} \gamma \end{pmatrix}
   =  st^{q-1} 
    \Delta^{-1} (\gamma -t^2 WR^{-1}\beta).
\]
 Thus, we need to compute
\[
 \lim_{t\to 0} s^2 t^{2q-2}
 \Delta^{-1} (\gamma -t^2 WR^{-1}\beta)
   F_*^{-1}  (\gamma^T -t^2 \beta^T R^{-1}W^T )\Delta^{-1}.
\]
 Now  $t^{2q-2}F_*^{-1}=t^p F_*^{-1}$ converges to $\eta(s)$ as defined
  in Equation \eqref{eq:defH} and
\[
 \lim_{t\to 0} 
  \Delta^{-1} (\gamma -t^2 WR^{-1}\beta)
   = 
   \lim_{t \to 0}\Delta^{-1}\gamma = Y^{-1}\gamma.
\]
 Thus the relevant limit exists and
 is
\[
   s^2 
    Y^{-1} \gamma \eta(s) \gamma^T Y^{-1} 
\]
 as claimed. 
\end{proof}

\subsection{A limit formula}
  The proof of the following proposition is based upon the observation that,
  with $\hh$ and $\hk$ defined in \eqref{eq:hats}, 
\[
  \lim_{t\to 0} \hk^T r(\thX) \hh = (\ttk\otimes k)^T r(\rho K) \tth\otimes h
\]
 is independent of $H$ (and $s$) for $\rho$ sufficiently close to zero.  
 This is because for $\rho$ and $s$ fixed appropriately, 
\beq
\label{eq:limnos}
 \lim_{t\to 0} r(\thX) = r(\hX\oplus \rho K) = r(\hX)\oplus r(\rho K).
\eeq
Indeed this is the key use of the hypothesis that 
the singularity $\mathbb X$ is well  hidden.

\begin{proposition}
\label{prop:indsrho}
  Given $K\in\smatMg$ and $H\in\smatMNg$,
 there exists a $\rho_0>0$ such that for each $|\rho|<\rho_0$
  there is an $s_0$ such that for $|s|<s_0$  
\beq
\label{eq:core}
 s^2 (C\ttk \otimes  k) Y^{-1} \gamma 
( I -  s^{2} \bM   \gamma Y^{-1} \gamma^T)^{-1} \bM 
  \gamma^T Y^{-1} 
 (C\tth \otimes h)=0.
\eeq
 In particular, 
\[
  (C\ttk \otimes  k) Y^{-1} \gamma  \bM 
  \gamma^T Y^{-1} 
 (C\tth \otimes h)=0.
\]
\end{proposition}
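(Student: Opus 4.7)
The strategy is to compute $\lim_{t\to 0} \hk^T r(\thX) \hh$ in two different ways and equate the results. For the first computation, I use the well-hiddenness hypothesis: for $|\rho|<\rho_0$ (the $\rho_0$ from the definition of well-hidden singularity), the point $\hX\oplus\rho K$ is itself a hidden singularity of $r$. Combined with Lemma \ref{lem:tXinDr}, which guarantees $\thX\in\mfI_{J-L_A(x)}$ as $t\to 0$ with the other parameters fixed appropriately small, this forces $\lim_{t\to 0}r(\thX)=r(\hX\oplus\rho K)$ to exist. Approximating $\hX$ by $\hX_\varepsilon\in\mfI_{J-L_A(x)}(N)$ in the component of zero, so that $\hX_\varepsilon\oplus\rho K\in\mfI_{J-L_A(x)}(N+M)$ and $r(\hX_\varepsilon\oplus\rho K)=r(\hX_\varepsilon)\oplus r(\rho K)$ by the block-diagonal structure of $(J-L_A(\hX_\varepsilon\oplus\rho K))^{-1}$, and then letting $\varepsilon\to 0$, gives the direct-sum identity $r(\hX\oplus\rho K)=r(\hX)\oplus r(\rho K)$. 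Since $\hh$ and $\hk$ are supported in the second summand, this first computation yields
\[
\lim_{t\to 0}\hk^T r(\thX)\hh=(\ttk\otimes k)^T r(\rho K)(\tth\otimes h),
\]
which, crucially, is independent of $H$ and $s$.

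For the second computation, I feed Equation \eqref{eq:hrk} into Lemma \ref{lem:ifind0}. Equation \eqref{eq:gamlim} says the $\Gamma^{-1}$ term limits to $(C\ttk\otimes k)^T Y^{-1}(C\tth\otimes h)$, which combines with the $D\otimes I$ term in $r(\thX)$ to give exactly $(\ttk\otimes k)^T r(\rho K)(\tth\otimes h)$ (using $r(\rho K)=D\otimes I_M+(C^T\otimes I_M)Y^{-1}(C\otimes I_M)$). Equation \eqref{eq:ifind0} says the remaining term limits to $s^2(C\ttk\otimes k)^T Y^{-1}\gamma\,\eta(s)\,\gamma^T Y^{-1}(C\tth\otimes h)$. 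Equating the two expressions for the limit and cancelling the common piece gives
\[
s^2(C\ttk\otimes k)^T Y^{-1}\gamma\,\eta(s)\,\gamma^T Y^{-1}(C\tth\otimes h)=0,
\]
which, after substituting the formula for $\eta(s)$, is precisely identity \eqref{eq:core}.

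For the second conclusion, this identity holds for all $|s|<s_0$; dividing by $s^2$ and letting $s\to 0$, while using $\eta(s)\to\bM$ from \eqref{eq:limH}, produces the claimed identity. The main subtlety throughout lies in justifying the direct-sum decomposition $r(\hX\oplus\rho K)=r(\hX)\oplus r(\rho K)$ at the hidden singularity—the well-hiddenness hypothesis on $\hX$ is exactly what allows us to approach $\hX\oplus\rho K$ along the product family $\hX_\varepsilon\oplus\rho K$ lying in the invertibility set. Once this decomposition is in hand, the rest reduces to bookkeeping with the Schur-complement formulas already assembled in Lemma \ref{lem:ifind0}.
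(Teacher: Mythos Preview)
Your proof is correct and follows essentially the same line as the paper's: both use the well-hiddenness of $\hX$ to obtain $\lim_{t\to 0} r(\thX)=r(\hX)\oplus r(\rho K)$, observe that this forces the $s$- and $H$-dependent piece coming from Equation \eqref{eq:ifind0} to vanish identically, and then extract the ``in particular'' statement by passing to the coefficient of $s^2$. You supply a bit more detail than the paper does---explicitly matching the $\Gamma^{-1}$ contribution with $(\ttk\otimes k)^T r(\rho K)(\tth\otimes h)$ and justifying the direct-sum identity via the block-diagonal approximants $\hX_\varepsilon\oplus\rho K$---but the argument is the same.
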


\begin{proof}
 By  \eqref{eq:limnos}  the left hand side of  \eqref{eq:hrk}
\[    
  \hk^T [r(\thX)-D\otimes I] \hh =
  (C\ttk \otimes \hk)^T
  \Gamma^{-1}  (C\tth \otimes \hh)
 \  + \
   t^2   (C\ttk \otimes \hk)^T \;
   ( \Gamma^{-1} \zeta  F_*^{-1} \zeta^T \Gamma^{-1} )
  \;  (C\tth \otimes \hh),     
\]
has 
 a limit at $t=0$ independent
  of $s$ and $H$.  Thus the right side does too.
On the right hand side the limit of the first (left most) term is handled by 
Equation \eqref{eq:gamlim} and is, by inspection,
 independent of $s$ and $H$.
 Hence the limit of the  second term
 on the right hand side  is too.
  From 
Equation \eqref{eq:ifind0} of Lemma \ref{lem:ifind0},
the limit of this second term is the left side of Equation
\eqref{eq:core} which is thus independent of $s$. 
 Hence the left hand side of Equation \eqref{eq:core}
 is constantly equal to its value at $0$, namely $0$.
   Hence \eqref{eq:core} holds.
\end{proof}

\section{Proof of Theorem \ref{thm:main}} 
\label{sec:proof}
 Recall we have assumed that the minimal symmetric descriptor
 realization $r$ has the well hidden singularity $\hX.$
 Using the perturbation $\hX+t\hX$ we constructed
 a nonzero matrix $\bM$ defined by Equation \eqref{eq:defbM}.
 In this section we reach
 the contradiction $\bM=0,$ and deduce
  that $\hX$ was not in fact a well hidden singularity thus
 completing the proof of Theorem \ref{thm:main}.

  Recall that $Y= J-\rho L_A(K)$ depends upon both
  $\rho$ and $K$ and $\gamma$ depends on $H$.
  (On the other hand, $\bM$ depends only on $\hX$.)
  From Proposition \ref{prop:indsrho} 
\beq
\label{eq:prepresummary}
 (C\ttk \otimes k)^T  Y^{-1}\gamma \bM  \gamma^T Y^{-1} (C\tth \otimes h) = 0,
\eeq
 for all $K\in\smatMg,$ all $H\in\smatMNg$ and all  $\rho$ sufficiently small.
 Here, as in Proposition \ref{prop:indsrho}, $h,k\in\mathbb R^M$ 
  and $\tth,\ttk\in\mathbb R^\ell$. 

  Given a linear mapping $Q:\mathbb R^N \to\mathbb R^M$, choose
   $H=Q\hX=(Q\hX_1,\cdots,Q\hX_g)$ in 
  Equation \eqref{eq:prepresummary}. Recalling
  the definition of $\bV$ given at the outset of Section 
  \ref{sec:hidden}, note that 
   $L_A(Q\hX)=    (I_d \otimes Q) L_A(\hX)$ and further
\[
  \gamma = (  I_d\otimes Q) L_A(\hX)\bV =  (  I_d\otimes Q) (J \otimes I_N) \bV
\]
 because $L_A(\hX)= (J \otimes I_N) $ on $\mathcal K.$
Henceforth we abbreviate and  use $Q$ to denote $I_d\otimes Q$ 
and $J$ to denote  $J \otimes I_N$
(in accordance with usual practice). 
  Thus, by \eqref{eq:prepresummary},
\[
(C\ttk \otimes k)^T 
         ([J-\rr L_{A}(K)]^{-1} Q J \bV  \bM \bV^T J Q^T [J-\rr L_A(K)]^{-1} )
      C\tth \otimes h = 0.
\]
  It follows that
\begin{equation}
 \label{eq:main-alt}
(C\ttk \otimes k)^T 
         (J[I-\rr L_{AJ}(K)]^{-1}  Q J \bV  \bM  \bV^T J Q^T [I-\rr L_{JA}(K)]^{-1} J )
      C\tth \otimes h = 0.
\end{equation} 
 Write Equation \eqref{eq:main-alt} as  a power series in $\rho$ 
and use that   every coefficient must be zero to obtain, for each 
  non-negative integer $\nu$,
\begin{equation}
 \label{eq:firstoftwo}
  \sum_{j=0}^\nu (JC\ttk \otimes k)^T L_{AJ}(K)^j  Q J \bV\bM \bV^T J Q^T L_{JA}(K)^{n-j} (JC\tth \otimes h) = 0.
\end{equation}

  Fix words $\omega_1$ and $\omega_2$
  words of length $\nu_1$ and $\nu_2$ respectively 
  and let $\nu=\nu_1+\nu_2$.  
  In the notation of Subsection \ref{subsec:Fock}, let 
\[
 \cF = \cF(\nu_1) \oplus \cF(\nu_2)
\]
 and let  $K$ denote the tuple of self-adjoint matrices acting 
 on $\cF$ defined by
\[
  K_j =\begin{pmatrix} K_j(\nu_1) & 0 \\ 0 & K_j(\nu_2) \end{pmatrix}.
\]
 Thus, $K$ can viewed as an element of $\mathbb S_M(\mathbb R^g)$
 where $M$ is the dimension of $\cF$ (and can be computed
  explicitly in terms of $\nu_1,\nu_2$ and $g$).  
 Recall that $\hX$ is acting on $\mathbb R^N$ 
  and fix vectors $\zeta_1, \zeta_2\in\mathbb R^N$. 
  Using Lemma \ref{lem:fock},  define 
  $Q_j:\mathbb R^N\mapsto \cF(\nu_j)$ 
  such that $Q_j^T w =0$ if $w\ne \omega_j$
 and $Q_j^T \omega_j = \zeta_j$.  Define $Q:\mathbb R^N \to \cF$ by
\[
  Q^T = \begin{pmatrix} Q_1^T &  Q_2^T \end{pmatrix}:\cF \to \mathbb R^N.
\]
  Finally let $h= \emptyset \oplus 0\in\cF$ and 
 $k= 0\oplus \emptyset \in\cF.$

  For the choices in the previous paragraph 
   consider, for $0\le j \le \nu$,
\[
 \begin{split}
 Q^T  L_{JA}(K)^{\nu-j} (JC\tth \otimes h)
  = & \, Q^T \sum_{|w|=\nu-j} (JA)^w JC\tth  \otimes K^w h  \\
  = & \, Q^T \sum_{|w|=\nu-j} (JA)^w JC \tth \otimes (K^w(\nu_1)\emptyset\oplus 0) \\
  = & \sum_{|w|=\nu-j} (JA)^w JC \tth \otimes Q_1^T K^w(\nu_1)\emptyset.
\end{split}
\] 
 Hence, 
\[
  Q^T  L_{JA}(K)^{\nu-j} (JC\tth \otimes h) = \begin{cases} 0 & \mbox{ if } \nu-j < \nu_1\\
                  (JA)^{\omega_1} JC \tth \otimes \zeta_1 & \mbox{ if } \nu-j=\nu_1 \\
                   * & \mbox{ if } \nu-j>\nu_1. \end{cases}
\]
 Likewise,
\[
  Q^T L_{JA}(K)^j (JC\ttk \otimes k)= 
  \begin{cases} 0 & \mbox{ if } j<\nu_2 = \nu-\nu_1 \ \ (\nu-j>\nu_1)\\
                (JA)^{\omega_2} JC\ttk  \otimes \zeta_2 & \mbox{ if } j=\nu_2 \ \ (\nu-j=\nu_1) \\
                 * &  \mbox{ if } j>\nu_2, \end{cases}
\]
  where $*$ is an expression which plays no role in the argument.
  In particular, 
\[
\begin{split}
(JC\ttk \otimes k)^T (L_{AJ}(K))^j &  Q J \bV\bM \bV^T J Q^T (L_{JA}(K))^{\nu-j} (JC\tth \otimes h) \\
  =& \begin{cases} [(JA)^{\omega_2} JC\ttk \otimes \zeta_2]^T J \bV\bM \bV^T J [(JA)^{\omega_1} JC \tth \otimes \zeta_1] & \mbox{ if } j=\nu_2 \\
            0 & \mbox{ if } j\ne \nu_2. \end{cases}
\end{split}
\]
 Hence, from Equation \eqref{eq:firstoftwo}, it follows that
\[
  [(JA)^{\omega_2} JC\ttk \otimes \zeta_2]^T J \bV\bM \bV^T J [(JA)^{\omega_1} JC \tth \otimes \zeta_1] =0
\]
  for all choices of words  $\omega_j$ vectors $\zeta_j\in\mathbb R^N$
  and $\ttk,\tth\in\mathbb R^\ell$. 
  The minimality assumption on
 the descriptor representation this implies that $J\bV\bM \bV^TJ=0$ which
  in turn leads to the contradiction $\bM=0$ and completes the proof.
  (Here we have used $J$ is invertible which gives $\bV\bM \bV^T=0$.
  Now $\bV$ is the inclusion of $\mathcal K$ into $\mathbb R^\ell\otimes \mathbb R^N$,
  so that $\bV^T$ is onto $\mathcal K$.)

\section{Free rational functions and their domains}
\label{sec:rat}
  With the proof of Theorem \ref{thm:main} complete,
 it remains to establish Theorems \ref{cor:canthide}
  and \ref{cor:main}.   Each of these theorems
 naturally splits into two statements, one about the 
 limit domain and the other about the algebraic domain
 of the minimal symmetric descriptor realization $r$.
 This section gives  the needed background on the algebraic
 domain of $r$.  Readers interested only in the limit
 domain can safely skip to Section \ref{sec:repCor}
 where the theorems are proved.

\subsection{Noncommutative Polynomials}
  The construction of free rational expressions begins with polynomials.
   Recall, from Section \ref{sec:words} that $\mathcal W$ 
   denotes the free semigroup on the $g$ letters $x=(x_1,\dots,x_g)$
  and that ${}^T$ is the involution on $\mathcal W$ which 
  reverses the order of a word.  

  A free polynomial is then an $\mathbb R$ linear combination 
  of words from $\mathcal W$ and we let $\Rx$ denote the
  collection of free polynomials.  Hence $p\in\Rx$ has the form
\[
  p = \sum p_w w,
\]
 where the sum is finite. 
  Evaluations on $\mathcal W$
  extend to $p\in\Rx$   in the obvious way as
\[
  p(X) = \sum p_w X^w.
\]
  for $X\in\smatng$.

  A free $k_1\times k_2$ matrix-valued polynomial $p$ can be viewed either as
  a $k_1\times k_2$ matrix with entries from $\Rx$ or as a (finite) linear
  combination of words with (real) $k_1\times k_2$ matrix coefficients,
\[
  p = \sum P_w w.
\]
 In the first case one evaluates $p(X)$ entrywise and in the second case
 one has 
\[
  p(X) = \sum P_w \otimes X^w.
\]
  Note that, for
  $Z_n=(0_n,0_n,\dots,0_n) \in \smatng$ where
  each $0_n$ is the $n\times n$ zero matrix,
  $p(Z_n)=I_n\otimes p(Z_1)$. In particular,
  $p(Z_n)$ is invertible for all $n$ or no $n$.
  Because of this simple
  relationship, in the sequel we will often simply write $p(0)$
  with the size $n$ unspecified.

  The involution on $\cW$ naturally extends to matrix-valued polynomials as
\[
   p^T = \sum P_w^T w^T.
\]
  The polynomial $p$ is \df{symmetric} if $p^T=p$.  
  Observe the involution is compatible with
  evaluation in that  $p^T(X)=p(X)^T$ and, if $p$ is symmetric,
 $p(X)^T=p(X)$; i.e., $p$ takes symmetric values.

\subsection{Rational Expressions and their Formal Domains}
  \label{item:intorat3}
  We use recursion to define the notion of a
  {\bf free (noncommutative) rational expression $r$
  analytic at $0$}
    and its value  $r(0)$ at $0$.
  This class includes free matrix-valued polynomials and $p(0)$ is the
  value of $p$ at $0.$ 
  If $p$ is $k\times k$ (square) matrix-valued and the
  $k\times k$ matrix $p(0)$ is invertible, then $p$ is invertible, its
  inverse is a free rational expression analytic at $0$,
  and $p^{-1}(0)=p(0)^{-1}$.
  Formal sums and products of free rational expressions
  analytic at $0$ with value at $0$ are defined accordingly,
  subject to the provision that the matrix sizes are compatible.
  Finally, a free rational expression $r$ analytic at $0$
  can be inverted  provided $r$ is  $k\times k$ (square)
  matrix-valued  and $r(0)$ is invertible. In this case, its inverse
  is a free rational expression, and $r^{-1}(0)=r(0)^{-1}$.

The \df{formal domain} in $\smatng$
  of a free rational expression $r$, denoted $\domfor(r,n)$, \index{domfor}
  is defined inductively. If $p$ is a polynomial, then it its
  formal domain  is all of $\smatng.$ If $r$ is the inverse
  of the polynomial $p$, then the formal domain of $r$ 
  is $\{ X \in \smatng : p(X) \text{ is an invertible matrix}\}$.
The formal domain of
a general free rational expression $r$
is equal to the intersection of the  formal domains $\domfor(r_j,n)$
for
the rational  expressions $r_j$ and, as necessary, 
their inverses  which
appear in the construction of the expression $r$.
 Note that by assumption $0\in\domfor(r,n)$.  
  Let $\domfor(r)$
 denote the sequence $(\domfor(r,n))_n$.

\subsection{ Equivalent Rational Expressions: Rational Functions }
\label{sec:ratFun}

  Note that two different expressions, such as
\begin{equation}
\label{eq:exr1r2}
       r_1= x_1 (1 - x_2 x_1)^{-1} \ \ \ \
\text{and} \ \ \ \
       r_2 = (1 - x_1 x_2)^{-1}x_1
\end{equation}
can be converted to each other using the rational operations
 described above.  Thus it is natural to 
  specify an equivalence relation on rational expressions.
 There are various ways of doing this
 and several
   are mentioned  in a paragraph  
in the introduction of [KvV] and the references
associated to it.  
 The notion used here mostly is that of  evaluation equivalence
  as found in \cite{HMV} and given in more detail in \cite{KvV}.
  While it is not needed for the results here,
  the notion of series equivalence is used 
  in Subsection \ref{sec:KvVpf} in connection with Remark \ref{rem:KvVpf}.
  Next we briefly describe these notions which turn out to
  be the same. 
  
  It is clear how to evaluate a free rational expression $r$
   on any $X \in \domfor(r)$.
   We can use these evaluations  to define an equivalence on free
   rational expressions which  call evaluation equivalence.
   Two free rational expressions $r$ and $s$ analytic at
   $0$ are {\bf evaluation equivalent} provided
   $r(X)=s(X)$ for each $n$ and each $X$ in the Zariski open
   set $\domfor(r,n) \cap\domfor(s,n)$.
   We reiterate that the $X_j$ are symmetric matrices.

\begin{remark}\rm
  The fact that both $r$ and $s$ are analytic at $0$ means that
  for each dimension $n$, the $0$ matrix $g$-tuple
  is in the intersection of their domains.
  Without this requirement that
  $r$ and $s$ are analytic at $0$
  it is possible that for certain
  $n$ one or both of the domains $\domfor(r,n)$  or $\domfor(s,n)$
  could be empty. Indeed, from the theory of polynomial
  identities, there are free polynomials which, for certain $n$,
   fail to be invertible on all of $\smatng$. 
\end{remark}

  The \df{algebraic domain} of rational expression $r$ (analytic at $0$) in $\smatng$, 
  denoted $\domalg(r,n),$ is the union of the formal domains
  for rational expressions $s$ (analytic at $0$) which are equivalent to $r$. 
   (We note that the equivalence class of a rational
  expression $r$ is what is typically called  a \df{free rational function} 
  $\mathfrak{r}$ and direct the reader to \cite{HMV} for further details.)
  \index{$\mathfrak{r}$}

\begin{lemma}
 \label{lem:limits}
   Let $r$ be a rational expression analytic at $0$.  If $\chi \in\domalg(r,n)$, 
  and $r_*$ is a rational expression equivalent to $r$ such that $\chi\in\domfor(r_*,n)$, then
\[
 \lim_{X\to \chi} r(X) = r_*(\chi),
\]
   where the limit is taken through $X\in\domfor(r,n)$.  In particular, if $\chi \in\domalg(r,n)$,
  then $\chi \in\dome(r,n)$. 
\end{lemma}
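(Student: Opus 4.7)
The plan is to leverage two standard properties of formal domains together with the evaluation equivalence of $r$ and $r_*$. First, the formal domain $\domfor(s,n)$ of any rational expression $s$ analytic at $0$ is open and dense in $\smatng$, and $s$ is continuous on $\domfor(s,n)$. Granted this, the argument is short: near $\chi$ we can replace $r$ by the better-behaved $r_*$, and the limit at $\chi$ is then computed by continuity.

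Step one is to verify the background claim that $\domfor(s,n)$ is open and dense and that $s$ is continuous there. I would carry this out by induction on the construction of rational expressions. Polynomials are continuous everywhere with $\domfor(p,n)=\smatng$. At an inversion step, one appends the condition that a square matrix-valued sub-expression $q$ is invertible, which on the (open) formal domain of $q$ is the complement of the zero set of $X\mapsto \det q(X)$. Because $s$ is analytic at $0$, this determinant is a nonzero real-analytic function on $\domfor(q,n)$, so its zero set is a proper real-analytic subvariety; the formal domain remains open and dense, and continuity is preserved because inversion is continuous on the invertibles. Sums and products are handled identically.

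Step two is the main argument. Since $\chi\in\domfor(r_*,n)$ and the latter is open, there is a Euclidean neighborhood $U$ of $\chi$ contained in $\domfor(r_*,n)$. Since $\domfor(r,n)$ is dense in $\smatng$, the set $U\cap \domfor(r,n)$ accumulates at $\chi$, and any limit through $\domfor(r,n)$ reduces, for $X$ sufficiently close to $\chi$, to a limit through $U\cap \domfor(r,n)\subset \domfor(r,n)\cap \domfor(r_*,n)$. On this intersection, evaluation equivalence gives $r(X)=r_*(X)$. Continuity of $r_*$ at $\chi\in\domfor(r_*,n)$ then yields
\[
\lim_{X\to\chi,\, X\in\domfor(r,n)} r(X) \;=\; \lim_{X\to\chi,\, X\in U\cap\domfor(r,n)} r_*(X) \;=\; r_*(\chi).
\]
The ``in particular'' statement is immediate: if $\chi\in\domalg(r,n)$, then by the very definition of the algebraic domain there exists a rational expression $r_*$ equivalent to $r$ with $\chi\in\domfor(r_*,n)$, so the limit above exists; specializing to a descriptor realization $r$, either $\chi\in\mfI_{J-L_A(x)}(n)\cap\smatng\subset\dome(r,n)$ or $\chi$ is a hidden singularity, and in either case $\chi\in\dome(r,n)$.

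I do not expect a serious obstacle. The only delicate point is bookkeeping in step one: one must confirm that the determinant conditions introduced at each inversion are \emph{real}-analytically nontrivial on the (possibly already restricted) formal domain of the preceding sub-expression, which is exactly what analyticity at $0$ guarantees. Once that is in hand, step two is mechanical.
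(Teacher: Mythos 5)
Your proposal is correct; the paper's own ``proof'' of Lemma \ref{lem:limits} is simply the words ``Self evident,'' and what you have written out is precisely the implicit argument the authors are relying on (openness and density of formal domains, continuity there, and evaluation equivalence on the intersection $\domfor(r,n)\cap\domfor(r_*,n)$ near $\chi$). No gap; you have merely supplied the details the paper omits.
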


\begin{proof}
  Self evident.
\end{proof}

  Another notion of equivalence  comes from expanding two rational expressions
  $r$ and $s$  in a power
  series about $0$. The rational expressions $r$ and $s$ are \df{series equivalent} if
  the coefficients of their power series are the same. 
  Given $\epsilon >0$ and a positive integer $n$ let
\[
   N_\epsilon(n) = \{X\in\smatng : \sum X_j^2 \prec \epsilon I_n\}.
\]
  The \df{free $\epsilon$-neighborhood of $0$}
  \index{free neighborhood of $0$} \index{$N_\epsilon$} 
  (in $(\smatng)_n$ is the sequence of sets $(N_\epsilon(n))_n$. 
  In \cite{HMV} (see Proposition A.7) it is proved that series equivalence agrees with
  evaluation equivalence on some free $\epsilon$ neighborhood
  of $0$ (inside the principal component of
  $\domfor(r,n) \cap \domfor(s,n)$).

\def\hr{{\hat r}}

\subsection{The \cite{KvV} proof of Theorem \ref{cor:canthide} (alg) }
\label{sec:KvVpf}
We conclude this section by completing the tie between this paper
and \cite{KvV} where (alg) for rational expressions in free variables,
as opposed to the symmetric variables found here, is proved. 

 By simply ignoring the transpose operation on polynomials 
  and allowing evaluations on tuples of not necessarily self adjoint
 matrices one obtains the notion
 of free variables. 
For clarity, let $\{z_1,\dots,z_g\}$ denote freely noncommuting variables.
 Substituting $z_j$ for $x_j$ in Equation \eqref{eq:dreal} gives,
\begin{equation}
 \label{eq:drealz}
 r(z) = D+ C^T(J-L_A(z))^{-1}C.
\end{equation}
 Let $M_n(\mathbb R^g)$ denote the set of $g$-tuples $Z=(Z_1,\dots,Z_g)$
 of $n\times n$ matrices.  The descriptor realization $r$ is 
 naturally evaluated at $Z$ so long as $J\otimes I - L_A(Z)$ is
 invertible. In fact, this same observation holds for any
 rational expression giving rise to more expansive notions
 of domains (formal, algebraic, etc.) for rational expressions (analytic at $0$)
 as sequences whose $n$-th term is a subset of $M_n(\mathbb R^g)$. 

 Conversely, if $r$ is a rational expression in free variables which takes
  symmetric values when evaluated on tuples $X$ of symmetric matrices
  in the component of $0$ of its formal domain, then $r$ is 
  uniquely determined by its values on such tuples as we now
  explain.  

 
 What we must check to validate this claim is show
  that if $s$ is another
 rational expression in free variables, analytic at $0$,
  which agrees with $r$
 on symmetric matrices in the intersection of 
 the component of $0$ of their formal domains, 
 then they also agree on any matrices where they are both defined.
 As noted earlier from 
  \cite{HMV}), 
  $s$ and $r$ being evaluation equivalent 
 (as rational expressions in symmetric variables)
  in some neighborhood of $0$ 
 implies they are series equivalent.
  Hence their power series about $0$ 
 in the $z_j$ variables 
 are identical.   Thus $s(Z)$ equals $r(Z)$ whenever evaluated at 
 any tuple $Z$ of (not necessarily symmetric) square matrices  
in a free neighborhood of $0$.

 Let $t=r-s$.  Next we show 
 that $ t(Z)=0$  on its entire formal domain
by  proving this claim for every matrix dimension $n$ separately. Let 
 $Z$ be a $g$-tuple of generic matrices of size $n$,
 that is, the $k^{th}$ entry is a matrix with entries which are the 
 commuting variables  $Z^k_{ij}$.
Then
$t(Z)=N/\delta$ where the numerator $N$ is a matrix
polynomial and the denominator is a scalar polynomial
in the variables  $\zeta:= \{Z^k_{ij}: \ all \ k,i,j \}$.
By assumption, $\delta$ is nonzero at some neighborhood 
of zero and $N$ is zero in  some
neighborhood of zero. 
It follows that $N(\zeta)$ is identically zero.

Now that we have extended $r$ 
uniquely to a free
rational function $\breve  r,$
Theorem 3.1  \cite{KvV} interpreted in our context  
implies that 
there are no hidden singularities in the ``free" $\domalg$.
 Since our symmetric variable  $\domalg$
is contained in ``free" $\domalg$, we conclude there are 
no hidden singularities in $\domalg$.
 \qed

%

\section{LMI representations}
\label{sec:repCor}
  In this section we first prove Theorem  \ref{cor:canthide}
  by showing that in each case the hypotheses imply that
  a hidden singularity is well hidden and then applying Theorem
 \ref{thm:main}.  
 The LMI representation 
  results of Theorem \ref{cor:main} are then shown to be consequences
  of Theorem \ref{cor:canthide} and the main result of \cite{HM}.

\begin{proposition}
\label{lem:unwell} 
 Suppose $r$ is a minimal symmetric descriptor realization.
 \begin{itemize}
  \item[(lim)]
  If for each $n$ the set $\fP_r(n)$ is convex and if $\chi \in\fP_r(N)$ is a hidden singularity,
 then $\chi$ is a well hidden singularity.
\item[(alg)]
 If $\chi\in\domalg(r,N)$ is a hidden singularity, then $\chi$ is a well hidden singularity. 
\end{itemize}
\end{proposition}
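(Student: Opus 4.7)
The two parts split naturally; (alg) is algebraic while (lim) requires convexity.

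For part (alg), the definition of $\domalg$ supplies a rational expression $s$ equivalent to $r$ with $\chi\in\domfor(s,N)$. Since $s$ is analytic at $0$, its formal domain contains a free neighborhood of $0$, so $\rho K\in\domfor(s,m)$ for $|\rho|$ sufficiently small. A routine induction on the construction of a rational expression shows formal domains are closed under direct sums, with $s(X\oplus Y)=s(X)\oplus s(Y)$, because both polynomial evaluation and matrix inversion respect block-diagonal structure. Hence $\chi\oplus\rho K\in\domfor(s,N+m)\subset\domalg(r,N+m)$, and by Lemma~\ref{lem:limits}, $\chi\oplus\rho K\in\dome(r,N+m)$. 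The block identity $J-L_A(\chi\oplus\rho K)=(J-L_A(\chi))\oplus(J-L_A(\rho K))$ is singular (since $J-L_A(\chi)$ is), so $\chi\oplus\rho K\notin\mfI_{J-L_A(x)}(N+m)$; combining these, $\chi\oplus\rho K$ is a hidden singularity.

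For part (lim), fix $K\in\smatmg$ and choose $\rho_0$ so that $\rho K\in\mfI_{J-L_A(x)}(m)\cap\fP_r(m)$ for $|\rho|<\rho_0$. As in (alg), $J-L_A(\chi\oplus\rho K)$ is singular, so it suffices to prove $\chi\oplus\rho K\in\dome(r,N+m)$. A natural candidate limit is already visible: since $\chi\in\fP_r(N)$ with $r(\chi)\succ 0$, any sequence $X^{(n)}\to\chi$ through $\mfI_{J-L_A(x)}(N)$ eventually has $r(X^{(n)})\succ 0$, and the block-diagonal path $X^{(n)}\oplus\rho K\in\mfI_{J-L_A(x)}(N+m)\cap\fP_r(N+m)$ obeys $r(X^{(n)}\oplus\rho K)=r(X^{(n)})\oplus r(\rho K)\to r(\chi)\oplus r(\rho K)\succ 0$. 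My strategy would be to upgrade levelwise convexity of $\fP_r$ to matrix convexity: invariance under unitary conjugation is automatic for a free rational expression, and direct-sum closure holds automatically on the interior $\mfI_{J-L_A(x)}$ by the product formula $r(X\oplus Y)=r(X)\oplus r(Y)$; combined with levelwise convexity this would give matrix convexity, which places $\chi\oplus\rho K\in\fP_r(N+m)\subset\dome(r,N+m)$.

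The main obstacle is verifying that the limit along an arbitrary approach path $\tilde X^{(n)}\to\chi\oplus\rho K$ through $\mfI_{J-L_A(x)}(N+m)$ equals the candidate $r(\chi)\oplus r(\rho K)$---equivalently, that matrix convexity genuinely extends across the hidden singularity. I would attempt this by interpolation: for each $n$, the line segment from $\tilde X^{(n)}$ to $X^{(n)}\oplus\rho K$ lies generically in $\mfI_{J-L_A(x)}(N+m)$, along which $r$ is a rational function of the segment parameter. Convexity of $\fP_r(N+m)$ forces $r$ to remain positive definite along the portion of the segment contained in $\fP_r$, while rationality controls its behavior across the rest; the two endpoints should then pin down the limit as $n\to\infty$. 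This interpolation-plus-rationality step is the main technical hurdle in making the argument precise.
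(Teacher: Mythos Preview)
Your argument for (alg) is correct and essentially the same as the paper's: an equivalent expression $s$ with $\chi\in\domfor(s,N)$, closure of formal domains under direct sums, and Lemma~\ref{lem:limits}.

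For (lim) you have identified a genuine issue---showing the limit of $r$ exists along \emph{every} approach to $\chi\oplus\rho K$ through $\mfI_{J-L_A(x)}(N+m)$---but your interpolation-plus-rationality plan is not carried out, and you correctly flag it as the hurdle. The paper sidesteps this entirely with a short trick that uses only the levelwise convexity hypothesis, no matrix convexity needed. Since $\det(J-L_A(t\chi))$ is a polynomial in $t$ that is nonzero at $t=0$, it is nonzero for $t$ near $1$ with $t\ne 1$; combined with $r(\chi)\succ 0$ this gives $(1\pm\delta)\chi\in\mfI_{J-L_A(x)}(N)$ with $r((1\pm\delta)\chi)\succ 0$ for small $\delta>0$. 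Then $(1\pm\delta)\chi\oplus\rho K\in\mfI_{J-L_A(x)}(N+m)$ and the block formula yields $r((1\pm\delta)\chi\oplus\rho K)\succ 0$, so both points lie in $\fP_r(N+m)$. Convexity of $\fP_r(N+m)$ now places the midpoint $\chi\oplus\rho K$ in $\fP_r(N+m)\subset\dome(r,N+m)$ directly---and since $\fP_r(N+m)$ is by definition a subset of $\dome(r,N+m)$, the existence of the limit comes for free from the convexity assumption rather than from any path-by-path analysis. This is the idea you were missing: perturb $\chi$ radially to land back in the invertibility set, then average.
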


\begin{proof}
 The argument at the outset of the proof of Lemma \ref{lem:Xdirection} 
 shows that for $t\ne 1$ sufficiently close to $1$
 the matrix $J-L_A(t\chi)$ is invertible.

  To prove item (lim), suppose $\chi\in\fP_r(N)$. Thus
\[
 \lim_{t\to 1} r(t\chi) = r(\chi)\succ 0
\]
 from which it follows that $r(t\chi)\succ 0$ for $t$ close to $1$. 

  The assumption that $r(0)\succ 0$ implies, given $K\in\smatmg$, 
  there exists an $\eta>0$ such that if $|\rho|<\eta,$ then
 $r(\rho K)\succ 0$.  It follows that, for $\delta>0$ sufficiently small,
  both $(1+\delta)\chi\oplus \rho K$ and $(1-\delta)\chi \oplus \rho K$
 are in $\fP_r(N+m)$. Hence, by the convexity assumption, so is
  the average, $\chi \oplus \rho K$.  Hence $\chi$
  is a well hidden singularity. 

  To prove item (alg), 
 suppose $\chi$ is a hidden
  singularity of $r$ and $\chi\in\domalg(r,N)$.  In this case
 there exists a rational expression $r_*$ which is equivalent to
 $r$ such that $\chi$ is in the formal domain of
 $r_*$.  From the definitions, the formal domain of
 $r_*$ also contains a free neighborhood of $0$ and hence, given
  $K\in\smatmg,$ there is an $\eta>0$ such that if 
  $|\rho|<\eta$, then $\chi \oplus \rho K$ is also in 
  the  formal domain of $r_*$. 
  An application of  Lemma \ref{lem:limits}
  implies that 
  $\chi \oplus \rho K$, is a hidden 
  singularity of $r$. 
  Thus $\chi$ is a well hidden singularity of $r$. 
\end{proof}

\begin{corollary}
 \label{cor:unwellmain}
  Suppose $r$ is a minimal symmetric descriptor realization.
 \begin{itemize}
  \item[(lim)]
  If for each $n$ the set $\fP_r(n)$ is convex, then $\fP_r$ contains no hidden singularities.
\item[(alg)]
  The sets $\domalg(r,n)$ contain no hidden singularities.
\end{itemize}
\end{corollary}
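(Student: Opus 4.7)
The plan is to derive Corollary \ref{cor:unwellmain} as an immediate combination of Proposition \ref{lem:unwell} and Theorem \ref{thm:main}, with essentially no new work required.

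First I would handle item (lim). Suppose, for contradiction, that some $\chi \in \fP_r(N)$ is a hidden singularity. Under the convexity hypothesis on each $\fP_r(n)$, Proposition \ref{lem:unwell}(lim) promotes $\chi$ to a well hidden singularity of $r$. But Theorem \ref{thm:main} asserts that the minimal symmetric descriptor realization $r$ (with $r(0) \succ 0$) has no well hidden singularities, a contradiction. Hence no such $\chi$ exists, so $\fP_r$ contains no hidden singularities.

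The argument for item (alg) is structurally identical. If $\chi \in \domalg(r,N)$ were a hidden singularity, then Proposition \ref{lem:unwell}(alg) would upgrade $\chi$ to a well hidden singularity (this step uses only the fact that $\chi$ lies in the formal domain of some rational expression equivalent to $r$, which then persists under direct summing with small $\rho K$). Invoking Theorem \ref{thm:main} again yields a contradiction.

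Since this proof is a two-line syllogism once the two cited results are in hand, there is no real obstacle: the substantive work has already been done in Section \ref{sec:proof} (for Theorem \ref{thm:main}) and in Proposition \ref{lem:unwell} (the promotion from hidden to well hidden). The only thing to be careful about is to check that the hypotheses of Proposition \ref{lem:unwell} and of Theorem \ref{thm:main} are exactly those assumed in Corollary \ref{cor:unwellmain}, namely that $r$ is a minimal symmetric descriptor realization with $r(0) \succ 0$ (inherited from the standing assumption) and, for (lim), that each $\fP_r(n)$ is convex. Both match verbatim, so the corollary follows.
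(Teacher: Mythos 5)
Your proposal is correct and is exactly the paper's argument: hidden singularities are promoted to well hidden ones by Proposition \ref{lem:unwell}, and Theorem \ref{thm:main} rules those out. (The paper's two-line proof appears to swap which result does which job, but the intended logic is identical to yours.)
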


\begin{proof}
  By Theorem \ref{thm:main} any hidden singularities in the sets $\fP_r(n)$ and 
  $\domalg(r,n)$ are well hidden. 
  By Proposition \ref{lem:unwell} the sets $\fP_r(n)$ and $\domalg(r,n)$ contain
  no well hidden singularities. 
\end{proof}



Given the descriptor realization $r$ and a positive integer $n$, let
\[
 \mathfrak{D}(r,n) = \{ X \in \smatng \;: \ X\in\mfI_{J-L_A(x)}(n)
   \mbox{ and }  r(X) \mbox{ is invertible}\}.
\]
 For  $X\in \mathfrak{D}(r,n)$ define 
\[
   s(X) = r(X)^{-1}.
\]
  Thus $s$ is the inverse of the rational expression $r$ and the sequence
  $(\mathfrak{D}(r,n))_n$ is the formal domain of $s$. 
  It is well known  that there is a minimal descriptor realization (evaluation) equivalent 
  to  $s$; i.e., there is a
  minimal symmetric descriptor realization  
\beq
\label{eq:rrep}
  \tr(x) = \tD +\tC^T(\tJ-L_{\tA}(x))^{-1} \tC
\eeq
  such that $\tr(X)= r(X)^{-1}$ for each $n$ and $X$ in the 
 open dense set  $\mathfrak{D}(r,n)\cap \mfI_{\tJ-L_{\tA}(x)}(n)$ 
 (c.f. \cite{HMV} Lemma 4.1).
 Recall 
$\cD_r(n)$ denotes the principal
 component  of the set
$  \{X\in\domalg(r,n): r(X)\succ 0\}
$ and $\cD_r=(\cD_r(n))_n$.

\begin{lemma}
 \label{lem:boundary}
  Suppose $r$ is a minimal symmetric descriptor realization.
\begin{itemize}
 \item[(lim)]
  \label{it:boundary-lim}
  If each $\fP_r(n)$ is convex, then  $\fP_r = \fP_{\tr}$.
   If $\chi$ is in the boundary of $\fP_r(n)$, then either
  $r$ has a singularity at $\chi$ or $\tr$ has a singularity at $\chi$.
  In particular,  $J-L_A(\chi)$ is singular or $\tJ-L_{\tA}(\chi)$
  is singular. 
  \item[(alg)]
  \label{it:boundary-alg}
   Likewise, for each $n$,
\begin{equation}
 \label{eq:domalgpos}
  \{X\in\domalg(r,n): r(X)\succ 0\} = \{X\in\domalg(\tr,n): \tr(X)\succ 0\}.
\end{equation}
  In particular
  $\cD_r=\cD_{\tr}.$  Further if $\chi$ is in the boundary
  of $\cD_r$ then either $J-L_A(\chi)$ or $\tJ-L_{\tA}(\chi)$ is singular. 
\end{itemize}
\end{lemma}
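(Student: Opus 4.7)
The plan is to use the fact that $\tr$ represents the rational inverse of $r$: on the open dense intersection $\mfI_{J-L_A}\cap\mfI_{\tJ-L_{\tA}}$ we have $r(X)\tr(X)=I$, so $r(X)\succ0$ if and only if $\tr(X)\succ0$ there. Continuity extends this equivalence to every point where the relevant limits exist, and the just-proved Corollary \ref{cor:unwellmain} eliminates the hidden-singularity pathology that could otherwise arise inside $\fP_r$ (under convexity) or inside $\domalg(r)$ (unconditionally).

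For part (alg) I would first prove the equality \eqref{eq:domalgpos} directly from the definition of the algebraic domain. If $X\in\domalg(r,n)$ with $r(X)\succ0$, pick a rational expression $r_*$ equivalent to $r$ whose formal domain contains $X$; since $r_*(X)=r(X)$ is invertible, $r_*^{-1}$ is a rational expression equivalent to $r^{-1}$ and hence to $\tr$, $X$ lies in its formal domain, and $\tr(X)=r_*(X)^{-1}\succ0$. Symmetry handles the reverse containment, and $\cD_r=\cD_{\tr}$ follows by taking principal components. For the boundary claim, suppose $\chi\in\partial\cD_r$ with both $J-L_A(\chi)$ and $\tJ-L_{\tA}(\chi)$ invertible. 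By Corollary \ref{cor:unwellmain}(alg) we have $\domalg(r,n)\subset\mfI_{J-L_A}(n)$, so $r$ and $\tr$ are continuous on an open neighborhood $U\subset\mfI_{J-L_A}(n)\cap\mfI_{\tJ-L_{\tA}}(n)$ of $\chi$. Continuity from points of $\cD_r$ approaching $\chi$ gives $r(\chi)\succeq0$, and the identity $\tr(\chi)r(\chi)=I$ forces invertibility and thus upgrades this to $r(\chi)\succ0$; shrinking $U$ we have $r\succ0$ throughout $U$. Since $\chi\in\overline{\cD_r}$ implies $U\cap\cD_r\neq\emptyset$, the union $U\cup\cD_r$ is connected and contained in $\{X\in\domalg(r,n):r(X)\succ0\}$, hence in the principal component $\cD_r$, contradicting $\chi\in\partial\cD_r$.

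For part (lim) the same plan applies, using Corollary \ref{cor:unwellmain}(lim) in place of its algebraic counterpart. Convexity of each $\fP_r(n)$ yields $\fP_r(n)=\{X\in\mfI_{J-L_A}(n):r(X)\succ0\}$. For $\fP_r\subset\fP_{\tr}$, I would fix $X\in\fP_r(n)$ and take $X'\to X$ through the dense subset $\mfI_{J-L_A}\cap\mfI_{\tJ-L_{\tA}}$; the identity $\tr(X')=r(X')^{-1}$ gives $\tr(X')\to r(X)^{-1}\succ0$, so $X$ is either in $\mfI_{\tJ-L_{\tA}}$ with $\tr(X)=r(X)^{-1}\succ0$, or is a hidden singularity of $\tr$ with the same limit value, and in either case $X\in\fP_{\tr}(n)$. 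The reverse inclusion $\fP_{\tr}\subset\fP_r$ uses the mirror identity $r(X')=\tr(X')^{-1}$, but must also handle an incoming $Y$ that is a hidden singularity of $\tr$; the very definition of $\dome(\tr,n)$ supplies $\tr(Y)\succ0$, after which $r(Y')\to\tr(Y)^{-1}\succ0$ through the dense intersection places $Y$ in $\fP_r(n)$ regardless of whether it is a pencil point or a hidden singularity of $r$. The boundary statement then follows by the same continuity-plus-invertibility argument as in (alg): if neither pencil is singular at $\chi\in\partial\fP_r$, then $r$ is continuous with $r(\chi)\succ0$ on a full neighborhood in $\mfI_{J-L_A}(n)$, contradicting $\chi$ being on the boundary.

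The main obstacle is the careful bookkeeping of limits in the (lim) case, because $\fP_r$ is defined through $\dome$ and hence intrinsically involves hidden singularities; without Corollary \ref{cor:unwellmain}(lim) the positivity of $r$ could in principle persist through erratic pencil blow-ups. Once that corollary is in hand the continuity and $r\tr=I$ arguments are essentially routine, but the writing requires keeping track of which of $\mfI_{J-L_A}$, $\mfI_{\tJ-L_{\tA}}$, $\dome$, or the algebraic domain each point under discussion belongs to, and which notion of continuity is therefore available.
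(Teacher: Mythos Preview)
Your approach is correct and tracks the paper's proof closely. A few remarks on where you differ and where care is needed.

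For the equality \eqref{eq:domalgpos} in (alg), your argument is actually a bit cleaner than the paper's. The paper first invokes Corollary~\ref{cor:unwellmain}(alg) to force $X$ into the \emph{formal} domain of $r$, so that it can work with the specific expression $r^{-1}$; you bypass this by passing to an arbitrary equivalent expression $r_*$ with $X\in\domfor(r_*)$ and forming $r_*^{-1}$. Since $r_*(0)=r(0)\succ 0$, the expression $r_*^{-1}$ is analytic at $0$ and equivalent to $\tr$, so this works, and the symmetry you appeal to is genuine because $(\tr)^{-1}$ is equivalent to $r$. Thus the equality of the two sets in \eqref{eq:domalgpos} holds without Corollary~\ref{cor:unwellmain}; the corollary is only needed for the boundary statement, exactly as you use it.

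For (lim), your outline matches the paper's. The one step that genuinely needs to be written out is the reverse inclusion $\fP_{\tr}\subset\fP_r$ in the case that $Y$ is a hidden singularity of $\tr$: you obtain $r(Y')\to\tr(Y)^{-1}$ along the dense set $\mfI_{J-L_A}\cap\mfI_{\tJ-L_{\tA}}$, but membership in $\fP_r$ requires the limit along all of $\mfI_{J-L_A}$. The paper handles this with a short $\epsilon$--$\delta$ argument (around Equation~\eqref{eq:hardinclusion1}): given any $X\in\mfI_{J-L_A}$ near $Y$, continuity of $r$ at $X$ and density of the intersection let you approximate $r(X)$ by $r(Y'')$ for some $Y''$ in the intersection, whence $r(X)$ is close to $\tr(Y)^{-1}$. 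You flag exactly this as ``the main obstacle,'' so you have the right picture; just be sure to include the upgrade explicitly.

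Your boundary arguments (by contradiction, assuming both pencils invertible and deducing $r(\chi)\succ 0$ on a full neighborhood) are equivalent to the paper's case split; note that in (lim) the connectedness step from (alg) is unnecessary because $\fP_r(n)$ is the full positivity set, not a component.
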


\begin{proof}
  To prove item (lim) suppose $\chi\in\fP_r(n)$.  
 From Corollary \ref{cor:unwellmain} we have
  $\chi\in\mfI_{J-L_A(x)}(n)$.  
  If  $\chi\in\mfI_{\tJ-L_{\tA}(x)}$, then 
  $\tr(\chi) = r(\chi)^{-1}\succ 0$ and thus $\chi\in\fP_{\tr}(n)$.   
  If instead $\chi \notin \mfI_{\tJ-L_{\tA}(x)}$, then, taking
  the limit through  $X\in\mfI_{\tJ-L_{\tA}(x)}$ and
  using the fact that, for $X$ near $\chi$, both $X\in\mfI_{J-L_A(x)}$ 
  and $r(X)\succ 0$ gives,
\[
 \lim_{X\to \chi} \tr(X) = \lim_{X\to \chi} r(X)^{-1} =r(\chi)^{-1} \succ 0.
\]
  It follows that $\chi$ is a hidden singularity 
  of $\tr$ and $\tr(\chi)$ (defined by this limit) is
  positive definite. Hence $\chi\in\fP_{\tr}(n)$  
   and thus $\fP_r(n) \subset \fP_{\tr}(n)$. 


 To prove the reverse inclusion, 
 suppose $\chi\in\fP_{\tr}(n)$.  
  If $\chi\in\mfI_{J-L_{A}(x)} \cap \mfI_{\tJ-L_{\tA}(x)}$, then 
  $r(\chi) = \tr(\chi)^{-1}\succ 0$ and thus $\chi \in\fP_r(n)$. 
  If instead $\chi\in \mfI_{\tJ-L_{\tA}}$, but $\chi \notin \mfI_{J-L_{A}(x)}$, then, taking
  the limit through  $X$ in the (dense) set $\mfI_{J-L_{A}(x)}$ and
  using the fact that $X\in\mfI_{\tJ-L_{\tA}(x)}$ 
  and $\tr(X)\succ 0$ for $X$ near $\chi$, 
\[
 \lim_{X\to \chi} r(X) = \lim_{X\to \chi} \tr(X)^{-1} =\tr(\chi)^{-1} \succ 0.
\]
  It follows that $\chi$ is a hidden singularity 
  of $r$ and $r(\chi)$ (defined by this limit) is
  positive definite.  Thus, if $\chi\in \mfI_{\tJ-L_{\tA}(x)}$,
   then $\chi\in \fP_r(n)$. 

  Next suppose $\chi\notin\mfI_{\tJ-L_{\tA}(x)}$ and 
  for notational ease, let $\mfI= \mfI_{J-L_A(x)}\cap \mfI_{\tJ-L_{\tA}(x)}$. 
  Note that $\mfI$ is open and dense. 
  In this case,
\[
  \lim_{\mfI\ni X\to \chi} \tr(X) = \tr(\chi)\succ 0.
\]
 Hence, 
\begin{equation}
 \label{eq:hardinclusion1}
  \lim_{\mfI\ni X\to \chi} r(X) = \lim_{\mfI\ni X\to \chi} \tr(X)^{-1} 
  = \tr(\chi)^{-1} \succ 0.
\end{equation}
 
  Letting $L=\tr(\chi)^{-1}$, Equation \eqref{eq:hardinclusion1} 
  implies, given $\epsilon>0$ there is a $\delta$ so that if
  $\|X-\chi\| <\delta$ and $X\in \mfI$, then $\tr(X)$ is invertible and 
  $\|r(X)-L\|<\epsilon$.  Now suppose only that $X\in \mfI_{J-L_A(x)}$
  and $\|X-\chi\|<\delta$.  In this case, using continuity of 
  $r$ at $X$ and the fact that $\mfI$ is open and dense, 
  there is an $Y\in \mfI$ such that $\|Y-\chi\|<\delta$
  and $\|r(X)-r(Y)\| <\epsilon$. 
   Hence, 
\[
  \|r(X)-L\|<2\epsilon.
\]
  It follows that
\[
  \lim_{\mfI_{J-L_A(x)} \ni X \to \chi} r(X) = L.
\]
   As also $L\succ 0$, it follows that 
  $\chi\in\fP_{r}(n)$  
   and thus $\fP_{\tr}(n) \subset \fP_{r}(n)$.

 To complete the proof of item (lim),
 suppose $\chi$ is in the boundary of $\fP_r$.  In this case,
 either 
\[
 \lim_{X\to\chi} r(X)
\]
 exists and is both positive
 semidefinite and singular, or 
 the limit fails to exist (and necessarily $X\not\in\mfI_{J-L_A(x)}$).
 In this second case, $r$ has a singularity at $\chi$.  In the first case,
 $\tr$ must have a singularity at $\chi$. 
 In particular, either $J-L_A(\chi)$ is singular, or $\tJ-L_{\tA}(\chi)$ is singular. 

  Now we turn to the proof of item (alg). 
  First suppose $X\in\domalg(r,n)$ and $r(X)\succ 0$.
   By Corollary \ref{cor:unwellmain} we see $X$
   is in the formal domain of $r$.  It follows that
   the rational expression $r^{-1}$ is defined
   at $X$ and hence, by Corollary \ref{cor:unwellmain} (alg), $X\in\domfor(\tr,n)$.
   Moreover, $\tr(X)\succ 0$ since
    $r^{-1}(X)\succ 0$.  Hence, $\cD_r\subset \cD_{\tr}$. 

  To establish the reverse inclusion in Equation \eqref{eq:domalgpos},
  fix $X\in\cD_{\tr}(n)$. 
  Let $\hr$ denote the rational expression $(\tr)^{-1}$.  
  In particular, $\hr$ is equivalent to $r$. 
  By Corollary \ref{cor:unwellmain}
  applied to $\tr$, the tuple $X$ is in fact in 
  the formal domain of $\tr.$ Since also $X\in\cD_{\tr}(n)$
  it follows that  $\tr(X)\succ 0$
  (and is thus invertible). Thus
   $X$ is in the formal domain of $\hr$. 
   Hence $X$ is in the algebraic domain of $r$ and consequently,
  by Corollary \ref{cor:unwellmain}, $X$ is in the formal
  domain of $r$ (equivalently $J-L_A(X)$ is invertible). 
  Further, $r(X)=\hr(X)=\tr(X)^{-1} \succ 0$ and the reverse
  inclusion, $\cD_{\tr}\subset \cD_r$,  in Equation \eqref{eq:domalgpos} is established.

  To complete the proof of item (alg),
  suppose $\chi$ is in the boundary of $\cD_r(n)$. 
  If $\chi\notin\domalg(r,n),$ then 
  $J-L_A(\chi)$ is not invertible. Thus, it may be assumed that
  $\chi\in\domalg(r,n),$
  but $r(\chi)$ is both positive semidefinite and singular. 
  If $\chi\in \domalg(\tr,n)$, then
\[
   \lim_{X\to \chi} r(X)^{-1} = \lim_{X\to \chi} \tr(X), 
\]
  where the limit is taken through $X\in \mfI_{r},$
  giving the contradiction that $r(\chi)$ is invertible. 
  Hence $\tJ-L_{\tA}(\chi)$ fails to be invertible.
\end{proof}

\begin{proof}[Proof of Theorem \ref{cor:main}]
  To prove item (lim), suppose $\fP_r$ is convex.  
 
 Let $\mfI_P^\circ$ denote the component of $0$ of the invertibility set 
  of the affine linear pencil
\[
 P(x)=J\oplus \tJ - L_{A\oplus \tA}(x) = [J-L_A(x)]\oplus [\tJ-L_{\tA}(x)].
\]
  Let $X\in\smatng$
  be given.  If $X\in\fP_r$, then for each $0\le t\le 1$
  we have $tX\in\fP_r$ by convexity. 
  By Corollary \ref{cor:unwellmain}
  $J-L_A(tX)$ is invertible
  for each $t$, since $\fP_r$ contains
  no hidden singularities of $r.$ 
  By Lemma \ref{lem:boundary}, $\fP_r=\fP_{\tr}$ and again
  an application Corollary \ref{cor:unwellmain}
  implies that $\tJ-L_{\tA}(tX)$
  is invertible for $0\le t\le 1$.  It follows that
  $X$ is in the component of zero of the invertibility set of
  $\mfI_P$ and hence   $\fP_r \subset \mfI_P^\circ$. 

  Now suppose $X$ is in the component of zero of the invertibility set
  of $P$.  Choose any path $F(t)$ connecting zero to $X$ which
  lies entirely in the component of zero of $\mfI_P$. 
  If this path does not lie  entirely in $\fP_r=\fP_{\tr}$,
  then there is a first point $t_0$ such that $F(t_0)$
  is in the boundary of $\fP_r$. 
  Hence, by Lemma \ref{lem:boundary}, $P(F(t_0))$ must
  be singular.  But then $F(t_0)$ is not in 
  the invertibility set of $P$ and the
  path $F$ does not lie entirely in the component
  of zero of the invertibility set of $P$,
  a contradiction.  Hence $X\in\fP_r$. Thus
  $\mfI_P^\circ \subset \fP_r$ and the equality
 $\mfI_P^\circ=\fP_r$ is proved.

%

We have established
   $\fP_r$ is the component of zero of the invertibility set of
  the symmetric matrix valued polynomial $P$. Assuming also
  that $\fP_r$ is bounded, the main result of
  \cite{HM} says that  $\mfI_P^\circ$ has an LMI representation; i.e., there exists $\bA$ such that
 $X$ is in the component of zero of the set where $P$ is invertible if
 and only if $I-L_{\bA}(X)\succ 0$.

  The proofs of items (alg) and (lim) are similar
  as we see now.  
  Suppose $\cD_r$ is convex.  If $X\in\cD_r$, then for each $0\le t\le 1$
  we have $tX\in\cD_r$ by convexity.  By
   Corollary \ref{cor:unwellmain} the matrix
  $J-L_A(tX)$ is invertible
  for each $t$, since $\cD_r$ contains
  no hidden singularities of $r.$ 
  By Lemma \ref{lem:boundary}(alg), $\cD_r=\cD_{\tr}$ and again
  an application of Corollary \ref{cor:unwellmain}
  implies that $\tJ-L_{\tA}(tX)$
  is invertible for $0\le t\le 1$.  It follows that
  $X$ is in the component of zero of the invertibility set of
  the pencil $P$.
  Hence $\cD_r \subset \mfI_P^\circ$. 

  Now suppose $X$ is in the component of zero of the invertibility set
  of $P$.  Choose any path $F(t)$ connecting $0$ to $X$ which
  lies entirely in the component of zero of $P$. 
  If this path does not lie  entirely in $\cD_r=\cD_{\tr}$,
  then there is a first point $t_0$ such that $F(t_0)$
  is in the boundary of $\cD_r$. Hence, by
  Lemma \ref{lem:boundary} (alg), $P(F(t_0))$ must
  be singular.  But then $F(t_0)$ is not in 
  the invertibility set of $P$ and the
  path $F$ does not lie entirely in the component
  of zero of the invertibility set of $P$,
  a contradiction.  Hence $X\in\cD_r$. Thus
  $\mfI_p^\circ\subset \cD_r$ and the equality
 $\mfI_p^\circ=\cD_r$ is proved.

We have established that
  $\cD_r$ is the component of zero of the invertibility set of
  the symmetric matrix valued polynomial $P$. Assuming also
  that $\cD_r$ is bounded, the main result of
  \cite{HM} says that  $\mfI_P^\circ$ has an LMI representation; i.e., there exists $\bA$ such that
 $X$ is in the component of zero of the set where $P$ is invertible if
 and only if $I-L_{\bA}(X)\succ 0$. 
\end{proof}

\end{document}